\newtheorem{theorem}{Theorem}
\newtheorem{corollary}[theorem]{Corollary}
\newtheorem{definition}{Definition}
\newcommand{\Lap}{\text{Lap}}
\newcommand{\R}{\mathbb{R}}
\newcommand{\argmax}{\mathrm{argmax}}
\newcommand{\argmin}{\mathrm{argmin}}
\newcommand{\eps}{\epsilon}
\newcommand{\E}{\mathbb{E}}
\newcommand{\set}[1]{\left\{#1\right\}}
\newcommand{\abs}[1]{\left|#1\right|}
\newcommand{\ceil}[1]{\lceil#1\rceil}
\newcommand{\floor}[1]{\lfloor#1\rfloor}
\definecolor{DarkGreen}{rgb}{0.1,0.5,0.1}
\newcommand{\sk}[1]{\textcolor{orange}{[Sara: #1]}}
\newcommand{\ReportMax}{{\textsc{ReportMax}}}
\newcommand{\AboveThreshold}{{\textsc{AboveThreshold}}}
\newcommand{\PNCPD}{{\textsc{PNCPD}}}
\newcommand{\OnlinePNCPD}{{\textsc{OnlinePNCPD}}}
\begin{document}

\title{Privately detecting changes in unknown distributions }



\author{Rachel Cummings\footnotemark[1] \and Sara Krehbiel\footnotemark[2] \and Yuliia Lut\footnotemark[1] \and Wanrong Zhang\footnotemark[1]}

\renewcommand{\thefootnote}{\fnsymbol{footnote}}
\footnotetext[1]{School of Industrial and Systems Engineering, Georgia Institute of Technology. Email: \texttt{\{rachelc, yuliia.lut, wanrongz\}@gatech.edu}.  R.C. supported in part by a Mozilla Research Grant, a Google Research Fellowship, and NSF grant CNS-1850187. Y.L. and W.Z. supported in part by a Mozilla Research Grant, NSF grant CNS-1850187, and two ARC-TRIAD Fellowships from the Georgia Institute of Technology.  Much of this work was completed while R.C., Y.L., and W.Z. were visiting the Simons Institute for the Theory of Computing.}
\footnotetext[2]{Department of Mathematics and Computer Science, Santa Clara University. Email: \texttt{skrehbiel@scu.edu}. Supported in part by a Mozilla Research Grant. Much of this work was completed while S.K. was a faculty member at University of Richmond and visiting the Stanford Graduate School of Business.}


\maketitle

\renewcommand{\thefootnote}{\arabic{footnote}}

\begin{abstract}
The change-point detection problem seeks to identify distributional changes in streams of data. Increasingly, tools for change-point detection are applied in settings where data may be highly sensitive and formal privacy guarantees are required, such as identifying disease outbreaks based on hospital records, or IoT devices detecting activity within a home. Differential privacy has emerged as a powerful technique for enabling data analysis while preventing information leakage about individuals. Much of the prior work on change-point detection---including the only private algorithms for this problem---requires complete knowledge of the pre-change and post-change distributions. However, this assumption is not realistic for many practical applications of interest. This work develops differentially private algorithms for solving the change-point problem when the data distributions are unknown. Additionally, the data may be sampled from distributions that change smoothly over time, rather than fixed pre-change and post-change distributions.  We apply our algorithms to detect changes in the linear trends of such data streams.  Finally, we also provide experimental results to empirically validate the performance of our algorithms.
\end{abstract}

\newpage

\section{Introduction}

The \emph{change-point detection problem} seeks to identify distributional changes in streams of data.  It models data points as initially being sampled from a pre-change distribution $P_0$, and then at an unknown change-point time $k^*$, data points switch to being sampled from a post-change distribution $P_1$.  The task is to quickly and accurately identify the change-point time $k^*$.  The change-point problem has been widely studied in theoretical statistics \cite{shewhart:1931,page:1954,shiryaev:1963,pollak:1987,mei:2008a} as well as practical applications including climatology \cite{Lund:2002}, econometrics \cite{Bai:Perron:2003}, and DNA analysis \cite{Zhang:Siegmund:2012}.  

Much of the previous work on change-point detection focused on the \emph{parametric} setting, where the distributions $P_0$ and $P_1$ are perfectly known to the analyst.  In this structured setting, the analyst could use algorithms tailored to details of these distributions, such as computing the maximum log-likehood estimator (MLE) of the change-point time. In this work, we consider the \emph{nonparametric} setting, where these distributions are unknown to the analyst.  This setting is closer to practice, as it removes the unrealistic assumption of perfect distributional knowledge. In practice, an analyst may only have sample access to the current (pre-change) distribution, and may wish to detect a change to any distribution that is sufficiently far from the current distribution without making specific parametric assumptions on the future (post-change) distribution. The nonparametric setting requires different test statistics, as common approaches like computing the MLE do not work without full knowledge of $P_0$ and $P_1$.

In many applications, change-point detection algorithms are applied to sensitive data, and may require formal privacy guarantees.  For example, the Center for Disease Control (CDC) may wish to analyze hospital records to detect disease outbreaks, or the Census Bureau may wish to analyze income records to detect changes in employment rates. We will use \emph{differential privacy} \cite{DMNS06} as our privacy notion, which has been well-established as the predominant privacy notion in theoretical computer science.  Informally, differential privacy bounds the effect of any individual's data in a computation, and ensures that very little can be inferred about an individual from seeing the output of a differentially private analysis. Differential privacy is achieved algorithmically by adding noise that scales with the \emph{sensitivity} of a computation, which is the maximum change in the function's value that can be caused by changing a single entry in the database.  High sensitivity analyses require large amounts of noise, which imply poor accuracy guarantees (see Section \ref{s.dpbackground} for more details).

 
Unfortunately, most nonparametric estimation procedures are not amenable to differential privacy. 
  Indeed, all prior work on private change-point detection has been in the parametric setting, where $P_0$ and $P_1$ are known \cite{CKM+18,canonne2018structure}. A standard approach in the nonparametric setting is to first estimate a parametric model, and then perform parametric change-point detection using the estimated model.  Common nonparametric estimation techniques include kernel methods and spline methods \cite{parzen1962estimation, rosenblatt1956remarks} or nonparametric regression \cite{azzalini1989use}. 
These methods are difficult to make private in part because of the complexity of finite sample error bounds combined with the effect of injecting additional noise for privacy. 
In contrast, simple rank-based statistics--which order samples by their value--have easy-to-analyze sensitivity. 


In this work, we estimate nonparametric change-points using the Mann-Whitney test \cite{wilcoxon1945individual,mann1947test}, which is a rank-based test statistic, presented formally in Section \ref{s.cpbackground}.  This test picks an index $k$ and measures the fraction of points before $k$ that are greater than points after $k$.  For the change-point problem, this statistic should be largest around the true change-point $k^*$, and smaller elsewhere (under mild non-degeneracy conditions on the pre- and post-change distributions).  Also note that this statistic simply computes pairwise comparisons of the observed data, and it does not require any additional knowledge of $P_0$ or $P_1$ beyond the assumption that a data point from $P_0$ is larger than a data point from $P_1$ with probability $>1/2$. The test statistic has sensitivity $O(1/n)$ for a database of size $n$, and is known to have lower sensitivity than most other test statistics for the same task \cite{mann1947test}.




\subsection{Our Results}

In this paper, we provide differentially private algorithms for accurate nonparametric change-point detection in both the offline and online settings. We also show how our results can be applied to settings where data are not sampled i.i.d., but are instead sampled from distributions changing smoothly over time.

In the offline case, the entire database $X=\{x_1,\ldots,x_n\}$ is given up front, and the analyst seeks to estimate the change-point with small additive error.  We use the Mann-Whitney rank-sum statistic and its extension to the change-point setting due to \cite{darkhovskh1976nonparametric}.  At every possible change-point time $k$, the test measures the fraction of points before $k$ that are greater than points after $k$, using statistic $V(k) = \frac{\sum_{j=k+1}^n\sum_{i=1}^k I(x_i>x_j)}{k(n-k)}$.  The test then outputs the index $\hat{k}$ that maximizes this statistic.  Even before adding privacy, we improve the best previously-known finite sample accuracy guarantees of this estimation procedure.  The previous non-private accuracy guarantee has $O(n^{2/3})$ additive error \cite{darkhovskh1976nonparametric}, whereas our Theorem \ref{nonprivate.acc} in Section \ref{nonprivate.offline} achieves $O(1)$ additive error.

With these improved accuracy bounds, we give Algorithm \ref{algo.offline}, \PNCPD\ in Section \ref{private.offline} to make this estimation procedure differentially private.  Our algorithm uses the \ReportMax\ framework of \cite{dwork2014algorithmic}. The \ReportMax\ algorithm takes in a collection of queries, computes a noisy answer to each query, and returns the index of the query with the largest noisy value.  We instantiate this framework with our test statistics $V(k)$ as queries, to privately select the argmax of the statistics. One challenge is ensuring that the test statistics $V(k)$ have low enough sensitivity that the additional noise required for privacy does not harm the estimation error by too much. We show that our \PNCPD\ algorithm is differentially private (Theorem \ref{thm.offpriv}) and has $O(\frac{1}{\eps^{1.01}})$ additive accuracy (Theorem \ref{private.acc}), meaning that adding privacy does not create any dependence on $n$ in the accuracy guarantee.

In the online case, the analyst starts with an initial database of size $n$, and receives a stream of additional data points, arriving online. The analyst's goal here is to accurately estimate the change-point quickly after it occurs.  This is a more challenging setting because the analyst will have very little post-change data if they want to detect changes quickly.  In this setting, we give Algorithm \ref{algo.online}, \OnlinePNCPD\ in Section \ref{s.online}.  This algorithm uses the \AboveThreshold\ framework of \cite{DNRRV09,DNPR10}.  The \AboveThreshold\ algorithm takes in a potentially unbounded stream of queries, compares the answer of each query to a fixed noisy threshold, and halts when it finds a noisy answer that exceeds the noisy threshold.  Our algorithm computes the test statistic $V(k)$ for the middle index $k$ of each sliding window of the last $n$ data points.  Once the algorithm finds a window with a high enough test statistic, it waits for enough additional data points to meet the requirements of our {\em offline} algorithm \PNCPD\ for accuracy, and then calls \PNCPD\ on the $n$ most recent data points to estimate the change-point time. One technical challenge in the online setting is finding a threshold that is high enough to prevent false positives before a change occurs, and low enough that a true change will trigger a call to the offline algorithm.  We show that our \OnlinePNCPD\ algorithm is differentially private (Theorem \ref{thm.onlinepriv}) and has $O(\log n)$ additive error (Theorem \ref{online.acc}).

In Section \ref{s.drift} we apply our results to privately solve the problem of \emph{drift change detection}, where points are not sampled i.i.d.~pre- and post-change, but instead are sampled from smoothly changing distributions whose means are shifting linearly with respect to time, and the linear drift parameter changes at an unknown change-time $k^*$. We show how to reduce an instance of the drift change detection problem with non-i.i.d.~samples to an instance of the change-point detection problem to which our algorithms can be applied.  We show in Corollary \ref{smooth.theorem}
 that our algorithms also provide differential privacy and accurate estimation for the drift change detection problem. We also suggest extensions of this reduction technique so that our algorithms may also be applied for non-linear drift change detection for other smoothly changing distributions that exhibit sufficient structure.
 
In Section \ref{s.sim} we report experimental results that empirically validate our theoretical results.  We start by applying our \PNCPD\ algorithm to a real-world dataset of stock price time-series data that appear by visual inspection to contain a change-point, and find that our algorithm does find the correct change-point with minimal loss in accuracy, even for small $\epsilon$ values.  We then apply our \PNCPD\ algorithm to simulated datasets sampled from Gaussian distributions, varying the parameters corresponding to the size of the distributional change, the location of the change-point in the dataset, and $\epsilon$.  We also perform simulations for our application to drift change detection by simulating data points drawn from the drift change model, performing the reduction described in Section \ref{s.drift}, and applying our \PNCPD\ algorithm to the resulting dataset.  Lastly we apply our \OnlinePNCPD\ algorithm to streaming simulated datasets drawn from Gaussian distributions, again varying the parameters that correspond to the size of the distributional change, the location of the change-point in the dataset, and $\epsilon$.  In all cases, the empirical accuracy corresponds qualitatively to what our theoretical results predict.





\subsection{Related work}

Change-point detection is a canonical problem in statistics that has been studied for nearly a century; selected results include \cite{shewhart:1931,page:1954,shiryaev:1963,roberts:1966,lorden:1971,pollak:1985, pollak:1987, moustakides:1986,lai:1995, lai:2001,kulldorff:2001,mei:2006a, mei:2008a, mei:2010,chan:2017}. The problem originally arose from industrial quality control, and has since been applied in a wide variety of other contexts including climatology \cite{Lund:2002}, econometrics \cite{Bai:Perron:2003}, and DNA analysis \cite{Zhang:Siegmund:2012}.  In the parametric setting where pre-change and post-change distributions $P_0$ and $P_1$ are perfectly known, the Cumulative Sum (CUSUM) procedure \cite{page:1954} is among the most commonly used algorithms for solving the change-point detection problem.  It follows the generalized log-likelihood ratio principle, calculating $\ell(k)=\sum_{i=k}^n \log \frac{P_1(x_i)}{P_0(x_i)}$ for each $k\in[n]$, and declaring that a change occurs if and only if $\ell(\hat k)\ge T$ for MLE $\hat k = \argmax_k \ \ell(k)$ and appropriate threshold $T>0$.  Nonparametric change-point detection has also been well-studied in the statistics literature \cite{darkhovskh1976nonparametric,carlstein1988nonparametric, bhattacharyya1968nonparametric}, and requires different test statistics that do not rely on exact knowledge of the distributions $P_0$ and $P_1$.

The only two prior works on differentially private change-point detection \cite{CKM+18,canonne2018structure} both considered the parametric setting and employed differentially private variants of the CUSUM procedure and the change-point MLE underlying it. \cite{CKM+18} directly privatized non-private procedures for the offline and online settings
. \cite{canonne2018structure} gave private change-point detection as an instantiation of a solution to the more general problem of private hypothesis testing, partitioning time series data into batches of size equal to the sample complexity of the hypothesis testing problem, and then outputs the batch number most consistent with a change-point. Both works assumed that the pre- and post-distributions were fully known in advance. 

In our nonparametric setting, we use the Mann-Whitney test \cite{wilcoxon1945individual,mann1947test} instead of the MLE that the CUSUM procedure is built on.  The Mann-Whitney test was originally proposed as a rank-based nonparametric two-sample test, to test whether two samples were drawn from the same distribution using the null hypothesis that after randomly selecting one point from each sample, each point is equally likely to be the larger of the two. It was extended to the change-point setting by \cite{darkhovskh1976nonparametric}, for testing whether samples from before and after the hypothesized change-point were drawn from the same distribution.  Given a database $X=\{x_1, \ldots, x_n\}$, for each possible change-point $k$, the test statistic $V(k) = \frac{\sum_{j=k+1}^n\sum_{i=1}^k I(x_i>x_j)}{k(n-k)}$ counts the proportion of index pairs $(i,j)$ with $i\le k<j$ for which $x_i>x_j$.  This is a nonparametric test because it does not require any additional knowledge of the distributions from which data are drawn.  Additionally, the Mann-Whitney test is known to be more efficient \cite{gibbons2011nonparametric} and have lower sensitivity \cite{mann1947test} than most other test statistics for the same task, including the Wald statistic \cite{wald1940test} and the Kolmogorov-Smirnov test \cite{lilliefors1967kolmogorov}. Differentially private versions of related test statistics have been used in recent unpublished work in the context of hypothesis testing, but they have not been applied to the change-point problem \cite{CKS+18,CKS+19}.

Although the current paper largely follows the same structure as \cite{CKM+18} for privatizing the change-point procedure, the analysis of the algorithm is vastly different, due to new challenges introduced by the nonparametric setting. Most test statistics for nonparametric estimation have high sensitivity, and therefore require large amounts of noise to be added to satisfy differential privacy.  This means that off-the-shelf applications of nonparametric test statistics to the differentially private change-point framework of \cite{CKM+18} would result in high error.  Indeed, even with our use of the Mann-Whitney test statistic which was chosen for its low sensitivity, an immediate application of the best known finite-sample accuracy bounds \cite{darkhovskh1976nonparametric} yielded additive error $O(n^{2/3})$ in the offline setting for databases of size $n$.  To achieve our much tighter $O(\epsilon^{-1.01})$ error bounds required a new analysis.

\section{Preliminaries}\label{s.prelims}
This section provides the necessary background for interpreting our results for the problem of private nonparametric change-point detection.  Section \ref{s.cpbackground} defines the nonparametric change-point detection problem, Section \ref{s.dpbackground} describes the differentially private tools that will be brought to bear, and Section \ref{s.concentration} gives the concentration inequality which will be used in our proofs.

\subsection{Change-point background}\label{s.cpbackground}

Let $X=\{x_1,\dots,x_n\}$ be $n$ real-valued data points.  The \emph{change-point detection problem} is parametrized by two distributions, $P_0$ and $P_1$.  The data points in $X$ are hypothesized to initially be sampled i.i.d.~from $P_0$, but at some unknown change time $k^\ast \in [n]$, an event may occur (e.g., epidemic disease outbreak) and change the underlying distribution to $P_1$.  The goal of a data analyst is to announce that a change has occurred as quickly as possible after $k^\ast$.  Since the $x_i$ may be sensitive information---such as individuals' medical information or behaviors inside their home---the analyst will wish to announce the change-point time in a privacy-preserving manner.


In the standard non-private offline change-point literature, the analyst wants to test the null hypothesis $H_0:k^*=n$, where $x_1,\dots,x_n\sim_{\text{iid}}P_0$, against the composite alternate hypothesis $H_1:k^*<n$, where $x_1,\dots,x_{k^*}\sim_{\text{iid}} P_0$ and $x_{k^*+1},\dots,x_n\sim_{\text{iid}} P_1$. If $P_1$ and $P_0$ are known, the log-likelihood ratio of $k^*=\infty$ against $k^*=k$ will be given by 
\begin{equation*}\ell(k,X)=\sum_{i=k+1}^n \log \frac{P_1(x_i)}{P_0(x_i)}.\end{equation*}
The maximum likelihood estimator (MLE) of the change time $k^*$ is given by
$\argmax_{k\in[n]} \ell(k,X).$ However, note that to perform this test, the analyst must have complete knowledge of distributions $P_0$ and $P_1$ to compute the log-likelihood ratio.

In this paper, we consider the situation that we do not know both the pre-change distribution and the post-change distribution. We require no knowledge of the pre- and post- change distributions, and assume only that the probability of an observation from $P_0$ exceeding an observation from $P_1$ is different than the probability of an observation from $P_1$ exceeding an observation from $P_0$, which is necessary for technical reasons. 
The Mann-Whitney test \cite{wilcoxon1945individual} is a commonly used nonparametric test of the null hypothesis that it is equally likely that a randomly selected value from one sample will be less than or greater than a randomly selected value from a second sample.  \cite{darkhovskh1976nonparametric} proposed a modification of the Mann-Whitney test to solve the change-point estimation problem. For each possible change-point $k$, a test statistic counting the proportion of index pairs $(i,j)$ with $i\le k, j>k$ for which $x_i>x_j$ is calculated as follows:
\begin{equation}
V(k,X) = \frac{\sum_{j=k+1}^n\sum_{i=1}^k I(x_i>x_j)}{k(n-k)} \label{eq.Vk}
\end{equation}
For data $X$ drawn according to the change-point model with distributions $P_0,P_1$, this statistic is largest or smallest in expectation at the true change-point $k^*$ depending on the value $a=\Pr_{x_0\sim P_0,x_1\sim P_1}[x_0>x_1]$. If $a>1/2$, we estimate the change-point by taking the arg max of the Mann-Whitney statistics; otherwise we take the arg min.
When $X$ is clear from context, we will simply write $V(k)$. The estimator $\hat k$ is understood to denote the argmax or argmin of $V(k)$ depending on whether $a>1/2$.


We will measure the additive error  of our estimations of the true change-point as follows. 

\begin{definition}[$(\alpha, \beta)$-accuracy]
	A change-point detection algorithm that produces a change-point estimator~$\tilde{k}$   is \emph{$(\alpha,\beta)$-accurate} if $\Pr[|\tilde{k}-k^\ast|>\alpha] \le \beta$, where the probability is taken over randomness of the data $X$ sampled according to the change-point model with true change-point $k^*$ and (possibly) the randomness of the algorithm.
\end{definition}

\subsection{Differential privacy background}\label{s.dpbackground}

Differential privacy bounds the maximum amount that a single data entry can affect analysis performed on the database.  Two databases $X,X'$ are \emph{neighboring} if they differ in at most one entry.

\begin{definition}[Differential Privacy \cite{DMNS06}]\label{def.dp}
	An algorithm $\mathcal{M}: \mathbb{R}^n \rightarrow \mathcal{R}$ is \emph{$\epsilon$-differentially private} if for every pair of neighboring databases $X,X' \in \mathbb{R}^n$, and for every subset of possible outputs $\mathcal{S} \subseteq \mathcal{R}$,
	\[ \Pr[\mathcal{M}(X) \in \mathcal{S}] \leq \exp(\epsilon)\Pr[\mathcal{M}(X') \in \mathcal{S}]. \]
\end{definition} 

One common technique for achieving differential privacy is by adding Laplace noise.  The \emph{Laplace distribution} with scale $b$ is the distribution with probability density function: $\Lap(x|b) = \frac{1}{2b} \exp\left(-\frac{|x|}{b}\right)$.  We will write $\Lap(b)$ to denote the Laplace distribution with scale $b$, or (with a slight abuse of notation) to denote a random variable sampled from $\Lap(b)$.
The \emph{sensitivity} of a function or query $f$ 
is defined as 
$\Delta (f) = \max_{\text{neighbors } X, X'} | f(X) - f(X') |$, and it determines the scale of noise that must be added to satisfy differential privacy.  The Laplace Mechanism of \cite{DMNS06} takes in a function $f$, database $X$, and privacy parameter $\epsilon$, and outputs $f(X) + \Lap(\Delta (f)/\epsilon)$. 

One helpful property of differential privacy is that it \emph{composes}, meaning that the privacy parameter degrades gracefully as additional computations are performed on the same database.  

\begin{theorem}[Basic Composition \cite{DMNS06}]\label{thm.basic}
Let $\mathcal{M}_1$ be an algorithm that is $\epsilon_1$-differentially private, and let $\mathcal{M}_2$ be an algorithm that is $\epsilon_2$-differentially private.  Then their composition $(\mathcal{M}_1,\mathcal{M}_2)$ is $(\eps_1 + \eps_2)$-differentially private.
\end{theorem}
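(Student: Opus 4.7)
The plan is to prove the statement by a pointwise density (or probability mass) argument, then integrate over the target event. Fix any pair of neighboring databases $X, X'$ and any measurable subset $\mathcal{S} \subseteq \mathcal{R}_1 \times \mathcal{R}_2$ of possible joint outputs of $(\mathcal{M}_1, \mathcal{M}_2)$. The goal is to show
\[ \Pr[(\mathcal{M}_1(X), \mathcal{M}_2(X)) \in \mathcal{S}] \leq \exp(\epsilon_1 + \epsilon_2) \Pr[(\mathcal{M}_1(X'), \mathcal{M}_2(X')) \in \mathcal{S}]. \]

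First I would decompose the joint probability by conditioning on the first output. Writing $p_i^X(\cdot)$ for the output density (or mass function) of $\mathcal{M}_i$ on input $X$, I express the joint probability as
\[ \Pr[(\mathcal{M}_1(X), \mathcal{M}_2(X)) \in \mathcal{S}] = \int_{\mathcal{S}} p_1^X(s_1)\, p_2^X(s_2)\, d(s_1, s_2), \]
using independence of the internal randomness of the two mechanisms. Next, at each point $(s_1, s_2) \in \mathcal{S}$, I apply the $\epsilon_1$-DP guarantee of $\mathcal{M}_1$ in its pointwise form $p_1^X(s_1) \leq e^{\epsilon_1} p_1^{X'}(s_1)$, and analogously $p_2^X(s_2) \leq e^{\epsilon_2} p_2^{X'}(s_2)$. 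Multiplying these two bounds gives $p_1^X(s_1)\, p_2^X(s_2) \leq e^{\epsilon_1 + \epsilon_2} p_1^{X'}(s_1)\, p_2^{X'}(s_2)$ pointwise, and integrating back over $\mathcal{S}$ yields the desired ratio bound.

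Two subtleties deserve care. The first is that Definition \ref{def.dp} is stated for sets of outputs, not densities, so the pointwise density inequality requires an approximation argument via shrinking neighborhoods (or a standard Radon-Nikodym argument); this is routine and I would cite it rather than derive it. The second is adaptivity: if $\mathcal{M}_2$ is allowed to depend on the output of $\mathcal{M}_1$, the factorization becomes $p_1^X(s_1) \cdot p_2^X(s_2 \mid s_1)$, and I would observe that for each fixed $s_1$ the conditional mechanism $\mathcal{M}_2(\cdot \mid s_1)$ is still $\epsilon_2$-DP in its database input, so the same pointwise multiplication goes through. The main obstacle is really just being careful with this pointwise-to-setwise conversion; the combinatorial core of the argument, $e^{\epsilon_1} \cdot e^{\epsilon_2} = e^{\epsilon_1 + \epsilon_2}$, is immediate once the factorization is in place.
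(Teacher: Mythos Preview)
Your proof is correct and is essentially the standard argument for basic composition. However, the paper does not actually prove this theorem: it is stated in Section~\ref{s.dpbackground} as a background result and attributed to \cite{DMNS06} without proof. So there is no paper proof to compare against; your write-up simply supplies the omitted (and well-known) argument.
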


Our algorithms rely on the existing differentially private algorithms \ReportMax~\cite{dwork2014algorithmic}. The \ReportMax\ algorithm takes in a collection of queries, computes a noisy answer to each query, and returns the index of the query with the largest noisy value.  We use this as the framework for  our offline private nonparametric change-point detector \PNCPD\ in Section \ref{offline} to privately select the time $k$ with the highest Mann-Whitney statistics $V(k)$.

{\centering
	\begin{minipage}{\linewidth}
		\begin{algorithm}[H]
			\caption{Report Noisy Max: \ReportMax($X, \Delta, \{f_1, \ldots, f_m\}, \epsilon$)}
			\begin{algorithmic}
				\State \textbf{Input:} database $X$, set of queries $\{f_1, \ldots, f_m\}$ each with sensitivity $\Delta$, privacy parameter $\epsilon$
				\For {$i=1,\ldots,m$}
				\State Compute $f_i(X)$
				\State Sample $Z_i \sim \Lap(\frac{\Delta}{\epsilon})$
				\EndFor
				\State Output $i^* =\underset{i \in [m]}{\mathrm{argmax}} \left(f_i(X) + Z_{i} \right)$
			\end{algorithmic}
		\end{algorithm}
	\end{minipage}
}

\begin{theorem}[\cite{dwork2014algorithmic}]
	\ReportMax\ is $\epsilon$-differentially private. 
\end{theorem}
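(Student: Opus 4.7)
My plan is to prove the pointwise bound $\Pr[\cM(X) = i^*] \leq e^\epsilon\,\Pr[\cM(X') = i^*]$ for every fixed output index $i^* \in [m]$ and every pair of neighboring databases $X, X'$, which by Definition~\ref{def.dp} suffices to conclude $\epsilon$-differential privacy. The strategy is to isolate $Z_{i^*}$ from the rest of the noise: I would fix the noise vector $Z_{-i^*} = (Z_j)_{j \neq i^*}$, prove the desired ratio holds conditionally on $Z_{-i^*}$, and then integrate the conditional bound against the joint density of $Z_{-i^*}$, which is identical under $X$ and $X'$.

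Having fixed $Z_{-i^*}$, the event that \ReportMax\ outputs $i^*$ is, up to a measure-zero set of ties, exactly $\{Z_{i^*} > r^*(X)\}$, where $r^*(X) := \max_{j \neq i^*}\bigl(f_j(X) + Z_j - f_{i^*}(X)\bigr)$. Using the sensitivity hypothesis $|f_j(X) - f_j(X')| \le \Delta$ for every $j$, I would show that $r^*(X') \le r^*(X) + \Delta$ in the worst direction (the Dwork--Roth argmax argument permits a single $\Delta$ shift rather than $2\Delta$, because one case has $f_{j^*}$ rising while $f_{i^*}$ stays fixed and the other has $f_{i^*}$ falling while $f_{j^*}$ stays fixed). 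So the conditional event for $X'$ is a half-line for $Z_{i^*}$ that is a bounded translate of the one for $X$.

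The key step is then the Laplace tail-ratio inequality: for $Z_{i^*} \sim \Lap(\Delta/\epsilon)$ and any $r\in\R$ and $\delta \ge 0$, direct computation from the density shows $\Pr[Z_{i^*} > r] \le \exp(\epsilon\delta/\Delta)\,\Pr[Z_{i^*} > r + \delta]$; specializing to $\delta = \Delta$ gives the desired factor $e^\epsilon$. Applied with $r = r^*(X) \ge r^*(X') - \Delta$, this bounds the conditional output probability ratio by $e^\epsilon$ for every fixed $Z_{-i^*}$. Integrating the pointwise bound against the common joint density of $Z_{-i^*}$ yields the unconditional statement, and the symmetric argument handles the reverse inequality.

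The main obstacle I foresee is the careful tracking of sensitivity directions so that the conditional threshold gap comes out to exactly $\Delta$ rather than $2\Delta$; this is precisely where the argmax structure buys a factor-of-two improvement over a naive two-sided sensitivity argument and is the reason the final exponent is $\epsilon$ rather than $2\epsilon$. A secondary item is the measure-theoretic cleanup of the zero-probability tie event, which is routine once $Z_{i^*}$ is regarded as a continuous random variable with no atoms.
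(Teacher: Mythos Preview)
Your proposal has a genuine gap at precisely the step you flag as the main obstacle. You claim $r^*(X') \le r^*(X) + \Delta$, but this is false for general sensitivity-$\Delta$ queries: writing $r^*(X) = \max_{j\neq i^*}\bigl(f_j(X) + Z_j\bigr) - f_{i^*}(X)$, a move from $X$ to $X'$ can raise some $f_j$ by $\Delta$ and simultaneously lower $f_{i^*}$ by $\Delta$, giving $r^*(X') = r^*(X) + 2\Delta$ in the worst case. With $\Lap(\Delta/\epsilon)$ noise and a $2\Delta$ shift, the Laplace tail ratio yields $e^{2\epsilon}$, not $e^\epsilon$. The Dwork--Roth argument you invoke does obtain a single-$\Delta$ shift, but not via ``the argmax structure'' alone: it relies on the \emph{monotonicity} of counting queries. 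When $X'$ is obtained from $X$ by adding one record, every count satisfies $f_j(X) \le f_j(X') \le f_j(X)+1$; this one-sidedness is what lets one pair $f_{i^*}(X') \ge f_{i^*}(X)$ with $f_j(X') \le f_j(X)+1$ to get only a $\Delta$ shift. Your parenthetical casework does not capture this, and without monotonicity both adversarial moves can occur at once.

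The paper does not give its own proof of this cited theorem, but its proof of Theorem~\ref{thm.offpriv} for \PNCPD\ is the relevant comparison and confirms the point: because the Mann--Whitney statistics are non-monotonic, \PNCPD\ uses noise $\Lap(2\Delta(V)/\epsilon)$ and the argument there explicitly derives a $2\Delta(V)$ shift (``$Z_t' \ge Z_t^\ast + 2\Delta(V)$ ensures\dots''), with the remark that this noise level ``suffices for non-monotonic statistics.'' To make your outline go through for \ReportMax\ with $\Lap(\Delta/\epsilon)$ noise as stated, you must either add and use a monotonicity hypothesis on the $f_i$ in the threshold-shift step, or accept the $2\Delta$ shift and the weaker $2\epsilon$ conclusion.
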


The \AboveThreshold\ algorithm of \cite{DNRRV09,DNPR10}, refined to its current form by \cite{dwork2014algorithmic}, takes in a potentially unbounded stream of queries, compares the answer of each query to a fixed noisy threshold, and halts when it finds a noisy answer that exceeds the noisy threshold.  We use this algorithm as a framework for our online private nonparametric change-point detector \OnlinePNCPD\ in Section \ref{s.online} when new data points arrive online in a streaming fashion.

{\centering
	\begin{minipage}{\linewidth}
		\begin{algorithm}[H]
			\caption{Above Noisy Threshold: \AboveThreshold($X, \Delta, \{f_1, f_2, \ldots \}, T, \epsilon$) }
			\begin{algorithmic}
				\State \textbf{Input:} database $X$, stream of queries $\{f_1, f_2, \ldots \}$ each with sensitivity $\Delta$, threshold $T$, privacy parameter $\epsilon$
				\State Let $\hat{T} = T + \Lap(\frac{2\Delta}{\epsilon})$
				\For {each query $i$}
				\State Let $Z_i \sim \Lap(\frac{4\Delta}{\epsilon})$
				\If {$f_i(X)+Z_i>\hat{T}$}
				\State Output $a_i=\top$
				\State Halt
				\Else
				\State Output $a_i=\bot$
				\EndIf
				\EndFor
			\end{algorithmic}\label{alg.ant}
		\end{algorithm}
	\end{minipage}
	}
	
	\begin{theorem}[\cite{DNRRV09}]\label{thm.antpriv}
		\AboveThreshold\ is $\epsilon$-differentially private. 
	\end{theorem}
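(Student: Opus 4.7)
The plan is to fix neighboring databases $X, X'$ and show that for every output sequence $a$, the likelihood ratio $\Pr[\AboveThreshold(X) = a]/\Pr[\AboveThreshold(X') = a]$ is bounded by $e^\epsilon$. Since the algorithm halts as soon as it outputs $\top$, every possible output is either of the form $(\bot, \bot, \ldots, \bot, \top)$ with $\top$ in some position $k$, or the all-$\bot$ (non-halting) sequence. I would handle each case by a coupling of the Laplace noises under $X$ and $X'$ so that the coupled runs produce the same output, and then bound the density ratio of the couplings using the standard fact that shifting a $\Lap(b)$ variable by $s$ changes its density by at most a factor of $e^{|s|/b}$.

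For the halting-at-$k$ case, the event is $\{f_i(X) + Z_i < \hat T \text{ for all } i < k\} \cap \{f_k(X) + Z_k \geq \hat T\}$, where $\hat T = T + W$ with $W \sim \Lap(2\Delta/\epsilon)$ and each $Z_i \sim \Lap(4\Delta/\epsilon)$. I would fix $Z_1, \ldots, Z_{k-1}$ and couple by setting $W' = W + \Delta$, $Z_i' = Z_i$ for $i < k$, and $Z_k' = Z_k + 2\Delta$. Since $|f_i(X') - f_i(X)| \leq \Delta$, raising $\hat T$ by $\Delta$ preserves all the $\bot$ events, i.e., $f_i(X') + Z_i' \leq f_i(X) + \Delta + Z_i < \hat T + \Delta = \hat T'$. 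Similarly, shifting $Z_k$ up by $2\Delta$ preserves the halting event: $f_k(X') + Z_k' \geq f_k(X) - \Delta + Z_k + 2\Delta \geq \hat T + \Delta = \hat T'$. The density ratio contribution from the $W$-shift is at most $e^{\Delta/(2\Delta/\epsilon)} = e^{\epsilon/2}$, and from the $Z_k$-shift at most $e^{2\Delta/(4\Delta/\epsilon)} = e^{\epsilon/2}$, giving total ratio $e^\epsilon$. Integrating over the untouched noises $Z_1, \ldots, Z_{k-1}$ preserves this bound.

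For the non-halting (all-$\bot$) output, I would use the coupling $W' = W - \Delta$, $Z_i' = Z_i$, which ensures $f_i(X') + Z_i \leq f_i(X) + \Delta + Z_i < \hat T = \hat T' + \Delta$, so $f_i(X') + Z_i < \hat T'$ fails only if the $X$-side had already failed; hence the non-halting event is preserved. This costs only the single factor $e^{\epsilon/2} \leq e^\epsilon$ from the $W$-shift. Combining the two cases and appealing to the fact that pointwise bounds on likelihood ratios for disjoint output events suffice for differential privacy yields the theorem.

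The main obstacle is the careful bookkeeping in the coupling: one must verify that the shift of $\hat T$ by $+\Delta$ simultaneously preserves \emph{all} of the up-to-$k-1$ below-threshold events (which is why it is critical that $\hat T$ is sampled \emph{once} and shared across queries, so we pay for this shift only once), while separately ensuring that the halting event survives the worst-case adversarial change in $f_k$. The allocation of the privacy budget — giving the threshold noise scale $2\Delta/\epsilon$ and the per-query noise scale $4\Delta/\epsilon$ — is precisely what balances these two shifts so their combined cost is $e^\epsilon$ rather than something that grows with the stream length, which is the whole point of the construction.
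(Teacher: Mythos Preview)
The paper does not prove this statement; it is simply stated with a citation to \cite{DNRRV09} (the version here is the refinement in \cite{dwork2014algorithmic}). So there is no ``paper's own proof'' to compare against. Your approach is the standard coupling/change-of-variables argument from the Dwork--Roth monograph, and the halting-at-$k$ case is carried out correctly.

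There is, however, a sign error in your treatment of the non-halting (all-$\bot$) output. With $W' = W - \Delta$ you have $\hat T' = \hat T - \Delta$, and your chain
\[
f_i(X') + Z_i \le f_i(X) + \Delta + Z_i < \hat T = \hat T' + \Delta
\]
only yields $f_i(X') + Z_i < \hat T' + \Delta$, which does \emph{not} give $f_i(X') + Z_i < \hat T'$. The sentence ``$f_i(X') + Z_i < \hat T'$ fails only if the $X$-side had already failed'' is therefore unjustified. The fix is simply to shift in the other direction: take $W' = W + \Delta$ (the same shift you used in the halting case), so that $\hat T' = \hat T + \Delta$ and
\[
f_i(X') + Z_i \le f_i(X) + \Delta + Z_i < \hat T + \Delta = \hat T',
\]
preserving every below-threshold event at cost $e^{\Delta/(2\Delta/\epsilon)} = e^{\epsilon/2} \le e^{\epsilon}$. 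With this correction your argument is complete and matches the standard proof.
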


\begin{theorem}[\cite{DNRRV09}]\label{thm.atacc}
	For any sequence of $m$ queries $f_1,\ldots, f_m$ with sensitivity $\Delta$ such that $|\{i<m:f_i(X)\ge T-\alpha\}|=0$, \\AboveThreshold\ outputs with probability at least $1-\beta$ a stream of $a_1, \ldots, a_m \in \{\top, \bot\}$ such that $a_i=\bot$ for every $i\in[m]$ with $f(i)<T-\alpha$ and $a_i=\top$ for every $i\in[m]$ with $f(i)>T+\alpha$ as long as
	\begin{align*}\alpha\ge \frac{8\Delta\log (2m/\beta)}{\epsilon}. \end{align*}
\end{theorem}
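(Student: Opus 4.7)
The plan is to condition on a "good event" where the Laplace noises added by \AboveThreshold{} are all small in magnitude, and then verify directly that under this event every output is correct. Specifically, write $\rho = \hat{T} - T \sim \Lap(2\Delta/\epsilon)$ for the threshold noise and $Z_1, Z_2, \ldots \sim \Lap(4\Delta/\epsilon)$ for the per-query noises, and let $E$ be the event that $|\rho| < \alpha/2$ and $|Z_i| < \alpha/2$ for all $i \in [m]$. The strategy is to (i) choose $\alpha$ large enough that $\Pr[E] \ge 1-\beta$, and (ii) show that on $E$ the stream of outputs satisfies the required accuracy.

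For step (i), I would use the standard Laplace tail bound $\Pr[|\Lap(b)| \ge t] = \exp(-t/b)$. For the threshold noise, this gives $\Pr[|\rho| \ge \alpha/2] = \exp(-\alpha\epsilon/(4\Delta))$, which is at most $\beta/2$ as soon as $\alpha \ge (4\Delta/\epsilon)\log(2/\beta)$. For the per-query noises, a union bound over the $m$ queries gives $\Pr[\max_{i\in[m]} |Z_i| \ge \alpha/2] \le m \exp(-\alpha\epsilon/(8\Delta))$, which is at most $\beta/2$ whenever $\alpha \ge (8\Delta/\epsilon)\log(2m/\beta)$. Taking $\alpha \ge (8\Delta/\epsilon)\log(2m/\beta)$ (which dominates the first condition since $\log(2/\beta) \le \log(2m/\beta)$) and combining via a final union bound yields $\Pr[E] \ge 1-\beta$, matching the threshold stated in the theorem.

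For step (ii), assume $E$ holds and fix any index $i \in [m]$. If $f_i(X) < T-\alpha$, then $f_i(X) + Z_i < (T-\alpha) + \alpha/2 = T - \alpha/2$, while $\hat{T} = T + \rho > T - \alpha/2$, so $f_i(X) + Z_i < \hat{T}$ and the algorithm outputs $a_i = \bot$. If instead $f_i(X) > T+\alpha$, then $f_i(X) + Z_i > (T+\alpha) - \alpha/2 = T + \alpha/2 > T + \rho = \hat{T}$, so the algorithm outputs $a_i = \top$ (and halts, which is consistent with the claim since the hypothesis $|\{i<m : f_i(X) \ge T-\alpha\}| = 0$ ensures no earlier query would already have triggered a halt). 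Thus on $E$ the output stream has all the required correctness properties simultaneously.

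The main obstacle, if any, is bookkeeping around the stopping time: \AboveThreshold{} halts as soon as it outputs $\top$, so strictly speaking only finitely many $Z_i$'s are ever sampled. One has to verify that the statement about "every $i \in [m]$" is consistent with this early halting, which is handled precisely by the hypothesis that no query strictly before index $m$ lies above $T-\alpha$: under $E$, all such early queries produce $\bot$, so the algorithm proceeds through all $m$ queries and the union bound over $m$ noise variables is justified. Beyond that, the argument is just two applications of the Laplace tail inequality plus a union bound, and the constant $8$ in the theorem comes directly from the factor of $4$ in the per-query noise scale combined with the factor of $1/2$ in splitting the noise budget between $\rho$ and the $Z_i$'s.
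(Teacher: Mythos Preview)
The paper does not actually prove this statement: Theorem~\ref{thm.atacc} is stated as a preliminary result cited from \cite{DNRRV09} (and refined in \cite{dwork2014algorithmic}), and no proof appears in the paper itself. Your argument is correct and is precisely the standard proof of \AboveThreshold\ accuracy---bound the Laplace tails for the threshold noise and each of the $m$ per-query noises at level $\alpha/2$, union-bound, and then verify deterministically that on the resulting good event every comparison goes the right way; the constant $8$ falls out exactly as you explain from the $4\Delta/\epsilon$ scale on the per-query noise combined with the $\alpha/2$ split.
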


\subsection{Concentration inequalities}\label{s.concentration}

Our proofs will also use the following concentration inequality.

\begin{theorem}[McDiarmid \cite{McD89}] \label{lem.mcd}
	Define the discrete derivatives of the function $f(X_1,\ldots,X_n)$ of independent random variables $X_1,\ldots, X_n$ as
	\begin{equation}\label{dd_def}
	D_if(x):=\sup_{z}f(x_1,\ldots,x_{i-1},z,x_{i+1},\ldots,x_n)-\inf_{z}f(x_1,\ldots,x_{i-1},z,x_{i+1},\ldots,x_n).
	\end{equation}
	Then for $X_1,\ldots,X_n$ independent, $f(X_1,\ldots,X_n)$ is subgaussian with variance proxy $\frac{1}{4}\sum_{i=1}^n||D_if||^2_{\infty}.$ In particular,
	$$\Pr[f(X_1,\ldots,X_n)-\E [f(X_1,\ldots,X_n)]\ge t]\le \exp(-\frac{2t^2}{\sum_{i=1}^n ||D_if||^2_{\infty}}).$$
\end{theorem}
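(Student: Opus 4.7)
The plan is to prove McDiarmid's inequality via the martingale method, which is the standard route and fits naturally with the sub-gaussian conclusion stated in the theorem. First I would set up the \emph{Doob martingale} associated with $f$: let $M_0 = \E[f(X_1,\ldots,X_n)]$ and, for $k=1,\ldots,n$, define $M_k = \E[f(X_1,\ldots,X_n)\mid X_1,\ldots,X_k]$, so that $M_n = f(X_1,\ldots,X_n)$ and $(M_k)$ is a martingale with respect to the filtration generated by the $X_i$'s. Writing $g_k(x_1,\ldots,x_k) := \E[f(x_1,\ldots,x_k,X_{k+1},\ldots,X_n)]$, we have $M_k = g_k(X_1,\ldots,X_k)$, and the martingale difference $M_k - M_{k-1}$ is, conditionally on $X_1,\ldots,X_{k-1}$, just $g_k(X_1,\ldots,X_{k-1},X_k)$ minus its $X_k$-expectation.

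The key step is to bound the conditional range of each $M_k - M_{k-1}$ by $\|D_k f\|_\infty$. For fixed $x_1,\ldots,x_{k-1}$, by independence of the $X_i$'s,
$$\sup_{z_1}g_k(x_1,\ldots,x_{k-1},z_1) - \inf_{z_2}g_k(x_1,\ldots,x_{k-1},z_2) \le \E\bigl[\sup_{z_1}f(\ldots,z_1,X_{k+1},\ldots) - \inf_{z_2}f(\ldots,z_2,X_{k+1},\ldots)\bigr],$$
and the integrand is by definition at most $\|D_k f\|_\infty$. Hence, conditionally on the past, $M_k - M_{k-1}$ is a mean-zero random variable supported in an interval of length at most $\|D_k f\|_\infty$. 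Hoeffding's lemma then yields the conditional moment generating function bound $\E[\exp(\lambda(M_k - M_{k-1}))\mid X_1,\ldots,X_{k-1}] \le \exp(\lambda^2 \|D_k f\|_\infty^2 / 8)$ for every $\lambda \in \R$.

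Iterating this bound via the tower property gives
$$\E\bigl[\exp\bigl(\lambda(f(X_1,\ldots,X_n) - \E f(X_1,\ldots,X_n))\bigr)\bigr] = \E\Bigl[\prod_{k=1}^n \exp(\lambda(M_k - M_{k-1}))\Bigr] \le \exp\Bigl(\tfrac{\lambda^2}{8}\sum_{k=1}^n \|D_k f\|_\infty^2\Bigr),$$
which is precisely the sub-gaussian MGF bound with variance proxy $\tfrac{1}{4}\sum_k \|D_k f\|_\infty^2$. A standard Chernoff argument, optimizing $\lambda = 2t/\sum_k \|D_k f\|_\infty^2$, then yields the stated tail inequality $\Pr[f - \E f \ge t] \le \exp\bigl(-2t^2/\sum_k \|D_k f\|_\infty^2\bigr)$.

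The main obstacle is the first bound described above: pushing the $\sup$ and $\inf$ inside the expectation and recognizing the result as $D_k f$. The rest is routine (Hoeffding's lemma, telescoping the martingale MGF, Chernoff). One small subtlety worth stating carefully is that the discrete-derivative definition in \eqref{dd_def} takes $\sup$ and $\inf$ over the single replaced coordinate with all other coordinates fixed, which is exactly what makes the Fubini-type interchange above valid and gives a deterministic bound on the conditional range.
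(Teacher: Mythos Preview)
Your proof is correct and follows the standard martingale route (Doob martingale, Hoeffding's lemma on the bounded differences, telescoping the MGF, Chernoff). However, the paper does not actually prove this theorem: it is stated in Section~\ref{s.concentration} as a cited background result from \cite{McD89}, with no accompanying proof. So there is nothing in the paper to compare your argument against; you have simply supplied the classical proof that the authors omitted.
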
 

%


\section{Offline private nonparametric change-point detection}\label{offline}

In this section, we give an offline private algorithm for change-point detection when the pre- and post-change distributions are unknown. In Section \ref{nonprivate.offline}, we first offer the finite sample accuracy guarantee for the non-private nonparametric algorithm given by $\hat k = \argmax\ V(k)$ for the test statistic $V(k)$ given in Equation \eqref{eq.Vk}, which will serve as the baseline for evaluating the utility of our private algorithm. Then in Section \ref{private.offline} we present our private algorithm, and give privacy and accuracy guarantees.

\subsection{Finite sample accuracy guarantee for the non-private nonparametric estimator}\label{nonprivate.offline}

In this section, we provide error bounds for the non-private nonparametric change-point estimator 
when the data are drawn from two unknown distributions $P_0$, $P_1$ with true change-point $k^*\in\set{\ceil{\gamma n},\dots, \floor{(1-\gamma)n}}$, for some known $\gamma<1/2$.  This $\gamma$ bounds away from the change-point occurring too early or too late in the sample, and is necessary to ensure sufficient number of samples from both the pre-change and post-change distributions. Without loss of generality, we assume that $a := \Pr_{x_0\sim P_0,x_1\sim P_1}[x_0>x_1]>1/2$.

For the non-private task, we use the following estimation procedure of \cite{darkhovskh1976nonparametric}, which calculates the estimated change-point $\hat k$ as the argmax of $V(k)$ over all $k$ in the range permitted by $\gamma$:
\begin{align*}
\hat k = \argmax_{k\in\set{\ceil{\gamma n},\dots, \floor{(1-\gamma)n}}} V(k),
\end{align*}
for test statistic $V(k)$ defined in Equation \eqref{eq.Vk}.
We show in Theorem \ref{nonprivate.acc} that the additive error of this procedure is constant with respect to the sample size $n$.

Our result is much tighter that the previously known finite-sample accuracy result in \cite{darkhovskh1976nonparametric}, which gave an estimation error bound of $O(n^{2/3})$. 
This sublinear result comes from a connection between the accuracy and the maximal deviation of  $V(k)$ from the expected value over $[\gamma n, (1-\gamma)n]$ . To bound the maximal deviation, \cite{darkhovskh1976nonparametric} first analyzed the variance approximation of $V(k)$ to bound the deviation for a single point $k$. Then they utilized a Lipschitz property to partition $[\gamma n, (1-\gamma)n]$ to small intervals, and took a union bound over these intervals to yield a high probability guarantee.       
In contrast, we better leverage the connection between $V(k)$ and $V(k^*)$ for improved accuracy and a simplified proof.
  At a high level, we show that the expectation of $V(k)$ is single-peaked around $k^*$, and $V(k)-V(k^*)$ is subgaussian. We carefully analyze the discrete derivative as a function of $\abs{k^*-k}$, $\gamma$, and $n$ to use a concentration bound yielding our constant error result. 

\begin{theorem}\label{nonprivate.acc}
	For data $X=\set{x_1,\dots,x_n}$ drawn according to the change-point model with any distributions $P_0,P_1$ with $a=\Pr_{x\sim P_0,y\sim P_1}[x>y]>1/2$, constraint $\gamma\in(0,1/2)$, and change-point $k^*\in \set{\ceil{\gamma n},\dots, \floor{(1-\gamma)n}}$, we have that the estimator 
	$$\hat k = \argmax_{k\in\set{\ceil{\gamma n},\dots, \floor{(1-\gamma)n}}} \frac{\sum_{j=k+1}^n\sum_{i=1}^k I(x_i>x_j)}{k(n-k)}$$ is $(\alpha,\beta)$-accurate for any $\beta>0$ and 
	$$\alpha= C\cdot \left(\frac{1}{\gamma^4 (a-1/2)^2}\right)^c\cdot \log \frac 1 \beta $$
	for any constant $c>1$ and some constant $C>0$ depending on $c$.
	
If $a<1/2$ we achieve the same error bound using $\hat k = \argmin \frac{\sum_{j=k+1}^n\sum_{i=1}^k I(x_i>x_j)}{k(n-k)}$.
	\end{theorem}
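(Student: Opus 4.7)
The plan is to bound $\Pr[V(k) \ge V(k^*)]$ for each $k$ with $|k-k^*| > \alpha$ and then union-bound over such $k$. Three ingredients are needed: (i) a lower bound on the expected gap $\mathbb{E}[V(k^*)] - \mathbb{E}[V(k)]$ that grows linearly in $|k-k^*|$; (ii) a McDiarmid concentration bound on $V(k)-V(k^*)$ whose variance proxy also scales with $|k-k^*|$; and (iii) a geometric summation that makes the final error bound independent of $n$.

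For (i), I would partition the index pairs in $V(k)$ by the pre/post-change status of $x_i$ and $x_j$. For $k<k^*$, all $x_i$ with $i\le k$ are drawn from $P_0$, while $x_j$ is drawn from $P_0$ for $k<j\le k^*$ (contributing probability $1/2$) and from $P_1$ for $j>k^*$ (contributing probability $a$). This gives $\mathbb{E}[V(k)] = 1/2 + (a-1/2)(n-k^*)/(n-k)$ for $k<k^*$, and symmetrically $\mathbb{E}[V(k)] = 1/2 + (a-1/2)k^*/k$ for $k>k^*$, so that $\mathbb{E}[V(k)]$ is maximized at $k^*$ with value $a$ and gap at least $(a-1/2)|k-k^*|/n$.

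For (ii), the crucial observation is to apply McDiarmid to $V(k)-V(k^*)$ directly rather than to each $V$ separately: the naive route gives variance proxy $\Theta(1/n)$ independent of $|k-k^*|$ and only an $O(\sqrt{n\log n})$ additive error. Instead, I would write $V(k)-V(k^*) = \sum_{i,j} c_{i,j}\,I(x_i>x_j)$ where pairs belonging to both index sets $S_k := \{(i,j):i\le k<j\}$ and $S_{k^*}$ carry coefficient $\delta := 1/(k(n-k)) - 1/(k^*(n-k^*))$, which factors as $(k^*-k)(n-k-k^*)/(k(n-k)k^*(n-k^*))$ and is $O(|k-k^*|/(\gamma^4 n^3))$; pairs in the symmetric difference of $S_k$ and $S_{k^*}$ carry coefficients of size $O(1/(\gamma^2 n^2))$. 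A case analysis on the perturbed sample index $m$ then shows $D_m(V(k)-V(k^*)) = O(|k-k^*|/(\gamma^2 n^2))$ whenever $m$ lies outside the interval between $k$ and $k^*$ (the tiny $\delta$ absorbs the $\Theta(n)$ shared pairs), and only $O(1/(\gamma n))$ when $m$ lies inside; since only $|k-k^*|$ indices are in the inside case, the resulting variance proxy is $O(|k-k^*|/(\gamma^4 n^2))$.

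McDiarmid then yields $\Pr[V(k)\ge V(k^*)] \le \exp(-c\gamma^4(a-1/2)^2|k-k^*|)$ for a universal $c$. Setting $\lambda = c\gamma^4(a-1/2)^2$ and summing the resulting geometric tail over $|k-k^*|>\alpha$ in both directions gives $\Pr[|\hat k - k^*|>\alpha] \le 4\exp(-\lambda\alpha)/\lambda$, so $\alpha \ge \lambda^{-1}(\log(1/\beta) + \log(1/\lambda))$ suffices. The auxiliary term $\lambda^{-1}\log(1/\lambda)$ is absorbed by the elementary inequality $\log(1/x)/x \le C_c \, x^{-c}$ valid for any $c>1$, yielding the claimed form $\alpha = C\cdot (1/(\gamma^4(a-1/2)^2))^c\log(1/\beta)$. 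The hardest step will be the discrete-derivative case analysis: for each location of $m$ relative to $k$ and $k^*$, one must track the cancellation between the two normalization factors that turns a naive $O(1/n)$ derivative estimate into the sharper $O(|k-k^*|/n^2)$ one. This is precisely the quantitative form of ``leveraging the connection between $V(k)$ and $V(k^*)$'' on which the $n$-independent error bound rests, and it is where the proof departs decisively from the analysis in \cite{darkhovskh1976nonparametric}.
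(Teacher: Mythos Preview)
Your proposal is correct and follows essentially the same approach as the paper: compute $\mathbb{E}[V(k^*)-V(k)]\ge (a-1/2)|k-k^*|/n$, apply McDiarmid to the difference $V(k)-V(k^*)$ with a three-case discrete-derivative analysis (index below both $k,k^*$, between them, or above both), obtain variance proxy $O(|k-k^*|/(\gamma^4 n^2))$, and sum the resulting exponential tail geometrically. The only minor discrepancy is your stated bound $O(1/(\gamma n))$ for the ``inside'' case, where the paper gets $O(1/(\gamma^2 n))$; this does not affect the final $\gamma^4$ dependence since the outside case already forces it.
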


\begin{proof}
	We will show that for $\hat k = \argmax\  V(k)$ and $\alpha$ as in the theorem statement, 
	$$\Pr[\abs{\hat k - k^*}>\alpha]\le \sum_{k:\abs{k-k^*}>\alpha}\Pr[V(k)>V(k^*)]\le \beta.$$
	To do this, we fix any $k\in\set{\ceil{\gamma n},\dots,\floor{(1-\gamma) n}}$ and show that $f(X)=V(k)-V(k^*)$ is subgaussian. In particular, 
	for $k$ at least $\alpha$ away from $k^*$, the expectation of $V(k^*)-V(k)$ is sufficiently large and its discrete derivative is sufficiently small  that the probability of $V(k)>V(k^*)$ can be tightly bounded as a function of $\alpha$ by application of Theorem~\ref{lem.mcd}.
	
	First we give a lower bound the difference in expectation of $V(k^*)$ and $V(k)$. Observe that 
	\begin{align*}
	\E[V(k)] &= \frac{\sum_{i\le k,j>k} \Pr[x_i>x_j]}{k(n-k)}\\
	&= \begin{cases}
	\frac{\frac 1 2 (k^*-k) + a(n-k^*)}{n-k} & k\le k^* \\
	\frac{ak^* + \frac 1 2 (k-k^*)}{k} & k>k^*
	\end{cases},
	\end{align*}
	achieving its maximum at $\E[V(k^*)]=a$. Therefore, we can bound
	\begin{align}
	\E[V(k^*)-V(k)] &= \begin{cases}
	(a-\frac 1 2)\frac{k^*-k}{n-k} & k\le k^* \\
	(a-\frac 1 2)\frac{k-k^*}{k} & k>k^*
	\end{cases} \notag \\
	&\ge (a-\frac 1 2)\frac{\abs{k^*-k}}{n}. \label{eq.expVk}
	\end{align}

In the following bounds on the discrete derivative of $f(X)=V(k)-V(k^*)$, we will make use of the fact that $f$ can be written as:
\begin{align*}
f(X) =&  \frac{\sum_{j=k+1}^n\sum_{i=1}^{k}I(x_i>x_j)}{k(n-k)}-\frac{\sum_{j=k^*+1}^n\sum_{i=1}^{k^*}I(x_i>x_j)}{k^*(n-k^*)} \\
=& \left( \frac{1}{k(n-k)}-\frac{1}{k^*(n-k^*)}\right) \left(\sum_{\substack{i=1,\dots,k, \\ j=k+1,\dots,n}}I(x_i>x_j) \right)\\
&+\frac{1}{k^*(n-k^*)} \left( \sum_{\substack{i=1,\dots,k,\\j=k+1,\dots,n}}I(x_i>x_j)-\sum_{\substack{i=1,\dots,k^*,\\j=k^*+1,\dots,n}}I(x_i>x_j)\right)
\end{align*}


We bound the discrete derivative $D_i f$ separately for $i\le\min\set{k,k^*}$, $i\in(\min\set{k,k^*},\max\set{k,k^*}]$, and $i> \max\set{k,k^*}$. When $x_i$ changes arbitrarily for $i\le \min\set{k,k^*}$, we note that $\sum_{j=k+1}^n I(x_i>x_j)$ can change by at most $\pm(n-k)$ and $\sum_{j=k+1}^{k^*} I(x_i>x_j)$ can change by at most $\pm (k^*-k)$. These counts are normalized in $f$, and the normalization ensures this former count contributes at most $\frac{\abs{k^*-k}}{k^*(n-k^*)}+\frac{\abs{k^*-k}}{kk^*}$ to the discrete derivative.  We bound the discrete derivative for $i\le \min\set{k,k^*}$ as follows:
\begin{align*}
D_i f &\le \abs{\frac{1}{k(n-k)}-\frac{1}{k^*(n-k^*)}} \left( n-k \right)+\frac{\abs{k^*-k}}{k^*(n-k^*)} 
\\
&=  \abs{\frac{1}{k}-\frac{n-k}{k^*(n-k^*)}}+\frac{\abs{k^*-k}}{k^*(n-k^*)}  \\
&=\abs{-\frac{\abs{k-k^*}}{k^*k}+\frac{\abs{k-k^*}}{k^*(n-k^*)}}+\frac{\abs{k-k^*}}{k^*(n-k^*)}   \\
&\le \frac{\abs{k-k^*}}{\gamma^2 n^2}+\frac{2\abs{k-k^*}}{\gamma(1-\gamma)n^2}   \\
&\le \frac{3\abs{k-k^*}}{\gamma^2 n^2}
\end{align*}

We bound the discrete derivative for $i>\max\set{k,k^*}$ similarly, noting that an arbitrary change in $x_i$ changes $\sum_{i'=1}^k I(x_{i'}>x_i)$ by at most $\pm k$ and $\sum_{i'=k^*+1}^k I(x_{i'}>x_i)$ by at most $\pm (k-k^*)$:
\begin{align*}
D_i f &\le \abs{\frac{1}{k(n-k)}-\frac{1}{k^*(n-k^*)}} \cdot k+\frac{\abs{k^*-k}}{k^*(n-k^*)} 
\\
&=  \abs{\frac{1}{n-k}-\frac{k}{k^*(n-k^*)}}+\frac{\abs{k^*-k}}{k^*(n-k^*)}  \\
&=\abs{-\frac{\abs{k-k^*}}{(n-k^*)(n-k)}+\frac{\abs{k-k^*}}{k^*(n-k^*)}}+\frac{\abs{k-k^*}}{k^*(n-k^*)}  \\
&\le \frac{\abs{k-k^*}}{\gamma^2 n^2}+\frac{2\abs{k-k^*}}{\gamma(1-\gamma)n^2}  \\
&\le \frac{3\abs{k-k^*}}{\gamma^2 n^2}
\end{align*}

Finally we bound the discrete derivative for $\min\set{k,k^*} < i \le \max\set{k,k^*}$. To do this, we note that the first summation in $f$ changes by $k$ if $k<k^*$ or $n-k$ if $k> k^*$, and the difference of summations in the second term changes by at most $n-(k+k^*)$ in either case. Then we achieve our bound as follows:
\begin{align*}
D_i f &\le \abs{\frac{1}{k(n-k)}-\frac{1}{k^*(n-k^*)}}\cdot \max\set{k,n-k}+\frac{n-(k^*+k)}{k^*(n-k^*)} \\
&\le \frac{\abs{k-k^*}}{\gamma^2 n^2}+\frac{n}{\gamma(1-\gamma)n^2} \\
&\le \frac 2 {\gamma^2 n}
\end{align*}

Then since $D_i f$ is finite for each $i$, we have that $f$ is subgaussian with variance proxy as follows:
\begin{align*}
\frac 1 4 \sum_{i=1}^n (D_i f)^2 
&\le \frac {n-\abs{k^*-k}}4 \cdot \frac{9\abs{k-k^*}^2}{\gamma^4 n^4} + \frac{\abs{k^*-k}}4\left(\frac{\abs{k-k^*}}{\gamma^2 n^2} + \frac{1}{\gamma(1-\gamma)n}\right)^2 \\
&\le \frac{9\abs{k-k^*}^2}{4\gamma^4 n^3} + \frac{\abs{k^*-k}}{\gamma^4 n^2} \\
&\le \frac{13\abs{k^*-k}}{4\gamma^4 n^2} 
\end{align*}

We can now bound the probability of outputting any particular $k=\ceil{\gamma n},\dots,\floor{(1-\gamma)n}$ as a function of $\abs{k-k^*}$ by applying Theorem~\ref{lem.mcd}, recalling our bound on $\E[V(k^*)-V(k)]$ from Equation \eqref{eq.expVk}.
\begin{align*}
\Pr[V(k)>V(k^*)] 
=&\Pr\left[V(k)-V(k^*)-\E [V(k)-V(k^*)]>\E[V(k^*)-V(k)]\right] \notag \\
\le&\Pr\left[V(k)-V(k^*)-\E [V(k)-V(k^*)]>(a-\frac{1}{2})\frac{|k-k^*|}{n}\right] \notag \\
\le& \exp(-\frac {2 \gamma^4}{13}(a-\frac{1}{2})^2|k-k^*|). 
\end{align*}

We complete the proof by bounding the probability of any incorrect $\hat k$ such that $|\hat k-k^*|>\alpha$ by~$\beta$. 

\begin{align}
\Pr[\abs{\hat k - k^*}>\alpha] &\le 2\sum_{|k-k^*|=\alpha}^n \exp(-\frac{2\gamma^4}{13}(a-\frac 1 2)^2\abs{k-k^*}) \notag \\
&\le \frac{2\exp(-\frac{2\gamma^4}{13}(a-\frac 1 2)^2\alpha)}{1-\exp(-\frac{2\gamma^4}{13}(a-\frac 1 2)^2)} \notag \\
&\le \beta \notag
\end{align}
Rearranging shows that our accuracy result will hold for 
\begin{align*}
\alpha
&\ge\frac{13}{2\gamma^4(a-1/2)^2}\left( \log\frac{2}{\beta}+\log\frac{1}{1-\exp(-\frac{2\gamma^4}{13}(a-\frac 1 2)^2)}\right)  
\end{align*}

We achieve our final bound by simplifying the above expression as follows. We observe that  $\gamma<1/2, a<1$ implies $x=2\gamma^4(a-1/2)^2/13\le 1/416$, and for small $x$ we have $\log(1/(1-\exp(-x)))\le 2\log(1/x)$. For any $c>0$, we have $\log (1/x)\le C(1/x)^c$ for any $1/x\ge 416$ and $C\ge (\log 416)/(416^c)$, which can be applied to get our final bound.

\end{proof}


\subsection{Private offline algorithm}\label{private.offline}

We now give a differentially private version of the nonparametric estimation procedure of \cite{darkhovskh1976nonparametric}, in Algorithm \ref{algo.offline}.  Our algorithm uses \ReportMax\ as a private subroutine, instantiated with queries $V(k)$ to privately compute $\argmax\ V(k)$. We show that our algorithm is differentially private (Theorem \ref{thm.offpriv}) and produces an estimator with additive accuracy that is constant with respect to the sample size $n$ (Theorem \ref{private.acc}).

The crux of the privacy proof involves analyzing the sensitivity of the Mann-Whitney statistic to ensure that sufficient noise is added for the \ReportMax\ algorithm to maintain its privacy guarantees. The low sensitivity of this test statistic plays a critical role in requiring only small amounts of noise to preserve privacy.
The accuracy proof extends Theorem~\ref{nonprivate.acc} for the non-private estimator to incorporate the additional error due to the Laplace noise added for privacy. Since the event $V(k)>V(k^*)$ is less probable for $k$ that are further away from $k^*$, our analysis permits larger values of Laplace noise $Z_k$ for $k$ far from $k^*$, allowing privacy ``for free'' with respect to accuracy, for small constants $\eps$. 

\begin{algorithm}[H]
	\caption{Private Nonparametric Change-Point Detector: \PNCPD($X,\epsilon,\gamma$)}
	\begin{algorithmic}
		\State  \textbf{Input:} Database $X=\set{x_1,\dots,x_n}$, privacy parameter $\eps$, contraint parameter $\gamma$.
		\For{$k=\set{\ceil{\gamma n},\dots,\floor{(1-\gamma)n}}$} 
		\State Compute $ V(k) =\frac{1}{k(n-k)}\sum_{j=k+1}^n \sum_{i=1}^k {I}(x_i>x_j) $
		\State Sample $Z_k \sim \Lap(\frac{2}{\epsilon\gamma n})$
		\EndFor
		\State Output $\tilde{k} =\underset{k\in\set{\ceil{\gamma n},\dots,\floor{(1-\gamma)n}}}{\mathrm{argmax}} \{V(k)+ Z_{k} \}$
	\end{algorithmic}\label{algo.offline}
\end{algorithm}

\begin{theorem}\label{thm.offpriv}
	For arbitrary data $X=\set{x_1,\dots,x_n}$, privacy parameter $\eps>0$, and constraint $\gamma\in(0,1/2)$, \PNCPD($X,\eps,\gamma$) is $\epsilon$-differentially private.
\end{theorem}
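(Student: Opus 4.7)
The plan is to cast \PNCPD\ as an instantiation of the \ReportMax\ framework with the queries $f_k(X)=V(k,X)$ for $k\in\{\lceil\gamma n\rceil,\dots,\lfloor(1-\gamma)n\rfloor\}$, and then invoke the \ReportMax\ privacy theorem cited earlier in the excerpt. Since \ReportMax\ is $\epsilon$-differentially private whenever Laplace noise of scale $\Delta/\epsilon$ is added to each of $m$ queries of sensitivity at most $\Delta$, and our algorithm adds noise of scale $2/(\epsilon\gamma n)$, the whole proof reduces to verifying that each query $V(k,\cdot)$ has sensitivity at most $\Delta = 2/(\gamma n)$.

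First I would bound $\Delta(V(k,\cdot))$. Fix neighboring databases $X,X'$ that differ in a single entry at some position $\ell$. The statistic
\begin{equation*}
V(k,X)=\frac{1}{k(n-k)}\sum_{j=k+1}^n\sum_{i=1}^k I(x_i>x_j)
\end{equation*}
treats $x_\ell$ as an ``early'' point if $\ell\le k$ and a ``late'' point if $\ell>k$. In the first case $x_\ell$ appears in exactly $n-k$ indicator terms, each of which changes by at most $1$ when $x_\ell$ is modified, so the unnormalized count changes by at most $n-k$ and $|V(k,X)-V(k,X')|\le (n-k)/(k(n-k))=1/k$. Symmetrically, if $\ell>k$ then $x_\ell$ appears in $k$ indicator terms and $|V(k,X)-V(k,X')|\le 1/(n-k)$. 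Therefore
\begin{equation*}
\Delta(V(k,\cdot))\le \max\Bigl\{\tfrac{1}{k},\tfrac{1}{n-k}\Bigr\}\le \frac{1}{\gamma n},
\end{equation*}
where the last inequality uses the constraint $k\in[\gamma n,(1-\gamma)n]$, which forces both $k\ge\gamma n$ and $n-k\ge\gamma n$.

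Second, I would finish by applying the \ReportMax\ privacy theorem with $\Delta:=2/(\gamma n)$, which strictly upper bounds the sensitivity computed above. Since \PNCPD\ samples $Z_k\sim\Lap(2/(\epsilon\gamma n))=\Lap(\Delta/\epsilon)$ and returns $\argmax_k (V(k,X)+Z_k)$, it is exactly \ReportMax\ instantiated with these queries and parameters, hence $\epsilon$-differentially private.

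The only nontrivial step is the sensitivity calculation; the main thing to be careful about is splitting into the two cases $\ell\le k$ and $\ell>k$ (since $x_\ell$ plays different roles in the double sum) and then using the window constraint imposed by $\gamma$ to convert the $k$-dependent bounds $1/k$ and $1/(n-k)$ into the uniform bound $1/(\gamma n)$. Everything else is a direct appeal to the black-box \ReportMax\ guarantee already stated in the preliminaries.
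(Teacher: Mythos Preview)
Your sensitivity calculation is correct and matches the paper's exactly: $\Delta(V(k,\cdot))\le 1/(\gamma n)$. The overall structure---reduce to \ReportMax\ after bounding sensitivity---is also the paper's approach. However, your final step has a gap concerning the factor of $2$ in the noise scale.

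You write that \ReportMax\ with noise $\Lap(\Delta/\epsilon)$ is $\epsilon$-DP for queries of sensitivity at most $\Delta$, and then set $\Delta:=2/(\gamma n)$, an overestimate of the true sensitivity $1/(\gamma n)$, so that the algorithm's noise $\Lap(2/(\epsilon\gamma n))$ matches $\Lap(\Delta/\epsilon)$. But if the black-box theorem really held for arbitrary queries with noise $\Lap(\Delta/\epsilon)$, then taking $\Delta=1/(\gamma n)$ would already give $\epsilon$-DP with \emph{half} the noise, and the factor of $2$ in the algorithm would be superfluous. The reason it is not superfluous is that the standard \ReportMax\ proof (as in \cite{dwork2014algorithmic}) relies on \emph{monotonicity}: going from $X$ to a neighbor $X'$ moves all query values in the same direction, so a noise shift of only $\Delta$ suffices. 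The statistics $V(k)$ are not monotone---changing $x_\ell$ can raise some $V(k)$ and lower others---so the shift argument requires $2\Delta$, and hence noise $\Lap(2\Delta/\epsilon)$ for $\epsilon$-DP. Simply inflating the sensitivity parameter you plug into the monotone theorem does not repair this; the hypothesis of that theorem is not met regardless of what you call $\Delta$.

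The paper handles this by explicitly remarking that ``noise parameter $2\Delta(V)/\epsilon$ suffices for non-monotonic statistics'' and then giving the direct two-sided argument: fix $Z_{-t}$, define $Z_t^*$ as the minimum noise for index $t$ to win on $X$, and show that $Z_t\ge Z_t^*+2\Delta(V)$ guarantees $t$ wins on $X'$, costing a factor $e^{\epsilon}$ with the chosen noise scale. Your proposal would be complete if you either (i) cited a non-monotone version of \ReportMax\ with the correct noise scaling, or (ii) reproduced this short direct argument.
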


\begin{proof} Privacy follows by instantiation of \ReportMax\ with queries $V(k)$ for $k\in\set{\ceil{\gamma n},\dots,\floor{(1-\gamma)n}}$, which have sensitivity $\Delta(V)=1/(\gamma n)$, with the observation that noise parameter $2\Delta(V)/\eps$ suffices for non-monotonic statistics. 
We include a proof for completeness.
	
	Fix any two neighboring databases $X,X'$ that differ on index $t$. For any $k\in\set{\ceil{\gamma n},\dots,\floor{(1-\gamma)n}}$, denote the respective rank statistics as $V(k)$ and $V'(k)$. By the definition of $V(k)$, we have
	\begin{align*}
	\abs{V(k)-V^\prime(k)}=
	\begin{cases}
	\frac{1}{k(n-k)}\abs{\sum_{j=k+1}^n\mathbb{I}(x_t>x_j)-\mathbb{I}(x_t'>x_j)} \le \frac{1}{k} \quad \text{if } t\le k \\
	\frac{1}{k(n-k)}\abs{\sum_{i=1}^k\mathbb{I}(x_i>x_t)-\mathbb{I}(x_i>x_t')} \le \frac{1}{n-k}\quad \text{if } t>k,
	\end{cases}
	\end{align*}	
	and it follows that $\Delta(V)=1/(\gamma n)$.

	Next, for a given $1 \le t \le n$, fix $Z_{-t}$, a draw from $[\Lap(2/\gamma\epsilon n)]^{n-1}$ used for all the noisy rank statistics values except the $l$th one. We will bound from above and below the ratio of the probabilities that the algorithm outputs $\tilde k = t$ on inputs $X$ and $X'$. 
	Define the minimum noisy value in order for $t$ to be selected with $X$:
	\begin{align*}
	Z^\ast_t &=\min \{{Z_t}: V(t)+Z_t>V(k)+Z_k \quad \forall k\neq t\} 
	\end{align*}
	
	For all $k\ne t$ we have
	$$V^\prime(t)+\Delta(V)+Z^\ast_t \ge V(t)+Z^\ast_t>V(k)+Z_k \ge V^\prime(k)-\Delta(V)+Z_k.$$	
	
	Hence, $Z_t^\prime \ge Z^\ast_t+ 2\Delta(V)$ ensures that the algorithm outputs $t$ on input ${X'}$, and the theorem follows from the following inequalities for any fixed $Z_{-t}$, with probabilities over the choice of $Z_t\sim \Lap\left(2/(\gamma\eps n)\right)$. 
	\begin{eqnarray*}
		\Pr[\tilde{k}=t \mid{X'}, Z_{-t}] \ge \Pr[ Z_i^\prime \ge Z^\ast_i+ 2\Delta(V)\mid Z_{-t}] \ge e^{-\epsilon}\Pr[ Z_t \ge Z^\ast_i\mid Z_{-t}] = e^{-\epsilon}\Pr[\tilde{k}=t \mid { X}, Z_{-i}]
	\end{eqnarray*}
	
\end{proof}

Next we provide an accuracy guarantee for our private algorithm \PNCPD\ when the data are drawn according to the change-point model. The first term in the error bound of Theorem~\ref{private.acc} comes from the randomness of the $n$ data points, and the second term is the additional cost that comes from the randomness of the sampled Laplace noises, which quantifies the cost of privacy. 
To ensure that the cost of privacy is as small as possible, we use \emph{$k$-specific thresholds} $t_k$ in the proof to optimize the trade-off between how much to tolerate the Laplace noise added for privacy versus the randomness of the data. As $\abs{k-k^*}$ increases, $V(k)$ is less likely to be close to $V(k^*)$, so we can allow more Laplace noise rather than set a universal tolerance for all $k$.

\begin{theorem}\label{private.acc}
	For data $X=\set{x_1,\dots,x_n}$ drawn according to the change-point model with any distributions $P_0,P_1$ with $a=\Pr_{x\sim P_0,y\sim P_1}[x>y]>1/2$, constraint $\gamma\in(0,1/2)$, change-point $k^*\in\set{\ceil{\gamma n},\dots,\floor{(1-\gamma)n}}$, and privacy parameter $\eps>0$, we have that \PNCPD($X,\eps,\gamma$) is $(\alpha,\beta)$-accurate for any $\beta>0$ and 
		\begin{equation*}
	\alpha=\max\{C_1\cdot \left(\frac{1}{\gamma^4 (a-1/2)^2}\right)^c\cdot \log \frac 1 \beta, C_2\cdot \left(\frac{1}{\epsilon\gamma (a-1/2)}\right)^c\cdot \log \frac 1 \beta \}, \label{private.alpha}
	\end{equation*}
	for any constant $c>1$ and some constants $C_1, C_2>0$ depending on $c$. 

\end{theorem}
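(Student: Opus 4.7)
\bigskip
\noindent\textbf{Proof proposal.}
The plan is to extend the analysis of Theorem~\ref{nonprivate.acc} by decomposing the event that a wrong index is selected into a ``data is atypical'' event and a ``Laplace noise is atypical'' event, using \emph{$k$-specific thresholds} $t_k$ to balance the two. Writing $\tilde k = \argmax_k (V(k) + Z_k)$, a union bound gives
\begin{equation*}
\Pr[|\tilde k - k^*| > \alpha] \le \sum_{k:\,|k-k^*|>\alpha} \Pr[V(k) + Z_k > V(k^*) + Z_{k^*}],
\end{equation*}
so it suffices to bound each summand and sum geometrically.

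For each $k$, I would choose $t_k = (a-\tfrac 1 2)|k-k^*|/(2n)$, i.e.\ half of the mean gap established in \eqref{eq.expVk}. The bad event is contained in $\{V(k)-V(k^*) > -t_k\} \cup \{Z_k - Z_{k^*} > t_k\}$, since if both complements hold then $V(k)-V(k^*) \le -t_k \le Z_{k^*}-Z_k$. The first event is bounded exactly as in the proof of Theorem~\ref{nonprivate.acc}: since $\E[V(k)-V(k^*)] \le -2t_k$, the McDiarmid bound (Theorem~\ref{lem.mcd}) on the subgaussian random variable $V(k)-V(k^*)$ with variance proxy $\le 13|k-k^*|/(4\gamma^4 n^2)$ yields
\begin{equation*}
\Pr[V(k) - V(k^*) > -t_k] \le \exp\!\Bigl(-\tfrac{\gamma^4(a-1/2)^2}{26}\,|k-k^*|\Bigr).
\end{equation*}
For the second event, $Z_k - Z_{k^*}$ is a difference of independent $\Lap(2/(\eps\gamma n))$ variables, so a standard two-sided Laplace tail estimate gives
\begin{equation*}
\Pr[Z_k - Z_{k^*} > t_k] \le 2\exp\!\Bigl(-\tfrac{\eps\gamma(a-1/2)}{8}\,|k-k^*|\Bigr).
\end{equation*}

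Summing both bounds over $|k-k^*| > \alpha$ gives two geometric tails of the form $C\exp(-c_i \alpha)/(1-\exp(-c_i))$ with $c_1 = \gamma^4(a-1/2)^2/26$ and $c_2 = \eps\gamma(a-1/2)/8$. Requiring each piece to be at most $\beta/2$ and simplifying $\log(1/(1-e^{-c_i}))$ by the same trick used at the end of the proof of Theorem~\ref{nonprivate.acc} (bounding $\log(1/x)$ by $(1/x)^c$ for any $c>1$ and $x$ small) yields the two terms in the stated $\alpha$, with the first arising from $c_1$ (matching the non-private rate) and the second from $c_2$ (the cost of privacy).

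The main obstacle is not any individual tail bound but rather the choice of the $k$-specific thresholds $t_k$: a uniform threshold would force the Laplace tolerance to be governed by the smallest gap $|k-k^*|\approx \alpha$ and thereby inflate the privacy cost by a factor depending on $n$, destroying the $n$-free error guarantee. Scaling $t_k$ linearly with $|k-k^*|$ is what lets both the concentration and Laplace tails decay exponentially in $|k-k^*|$ at rates independent of $n$, so that the geometric sums converge and the two contributions to $\alpha$ decouple into the max displayed in the theorem statement.
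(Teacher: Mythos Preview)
Your proposal is correct and follows essentially the same route as the paper: the same union bound, the same $k$-specific threshold $t_k=(a-\tfrac12)|k-k^*|/(2n)$, the same McDiarmid bound yielding the $\gamma^4(a-1/2)^2/26$ rate, the same Laplace tail yielding the $\eps\gamma(a-1/2)/8$ rate, and the same geometric-sum plus $\log(1/x)\le C(1/x)^c$ cleanup. The only cosmetic difference is your extra factor of $2$ in the Laplace tail, which is absorbed into the constants.
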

As with our analysis of the non-private estimator, we can take the argmin and get the same error bounds (with $a-1/2$ replaced by $\abs{a-1/2}$) if $\Pr_{x\sim P_0,y\sim P_1}[x>y]<1/2$.

\begin{proof} We will show that for $\tilde k = \argmax\set{V(k)+Z_k}$ and $\alpha$ as in the theorem statement, 
$$\Pr[\abs{\tilde k-k^*}>\alpha]\le \sum_{k:\abs{k-k^*}> \alpha}\Pr[V(k)+Z_k>V(k^*)+Z_{k^*}] \le \beta$$
by showing that $V(k)-V(k^*)$ is subgaussian as in Theorem~\ref{nonprivate.acc}, and we will additionally show that the Laplace noise does not introduce too much additional error. 
For the algorithm to output an incorrect $\tilde{k}$, it must either be the case that the statistic $V(k)$ is nearly as large as $V(k^*)$ because of the randomness of the data points, or that $Z_k$ is much larger than $Z_{k^*}$. For each value of $k$, we choose a threshold $t_k$ increasing in $\abs{k-k^*}$ specifying how much to tolerate bad Laplace noise versus bad data, and we bound the probability that the algorithm outputs $k$ as follows:
	\begin{align}\label{bad.each}
	\Pr [V(k)+Z_k>V(k^*)+Z_{k^*}]\le \Pr[V(k^*)-V(k)<t_k]+\Pr[Z_k-Z_{k^*}>t_k]
	\end{align}
Setting  $t_k=(a-1/2)|k-k^*|/(2n)$, we can bound the first term as in Theorem \ref{nonprivate.acc} using Theorem \ref{lem.mcd} as follows:
	\begin{align*}
	\Pr[V(k)-V(k^*)>-t_k]&=\Pr\left[V(k)-V(k^*)-\E [V(k)-V(k^*)]>\left(a-\frac{1}{2}\right)\frac{|k-k^*|}{2n}\right] \notag\\
	&\le \exp\left(-\frac{\gamma^4\left(a-\frac{1}{2}\right)^2\abs{k-k^*}}{26}\right).
	\end{align*}
	We bound the second term of (\ref{bad.each}) by analyzing the Laplace noise directly. 
	\begin{align*}
	\Pr[Z_k-Z_{k^*}>t_k]&\le \Pr\left[2\abs{\Lap({2}/({\eps\gamma n}))}>\left(a-\frac{1}{2}\right)\frac{|k-k^*|}{2n}\right] \notag\\
	&\le \exp\left(-\frac{\left(a-\frac 1 2\right)\epsilon\gamma |k-k^*|}{8}\right) \label{eq.lap}
	\end{align*}
	
	We complete the proof by bounding the probability of any incorrect $\tilde k$ such that $\abs{\tilde k-k^*}>\alpha$ by $\beta$. 
	\begin{align*}
	\Pr\left[\abs{\tilde k-k^*}>\alpha\right] &\le 2\sum_{k:\abs{k-k^*}=\alpha}^n \exp\left(-\frac{\gamma^4\left(a-\frac{1}{2}\right)^2\abs{k-k^*}}{26}\right)+\exp\left(-\frac{\left(a-\frac 1 2\right)\epsilon\gamma |k-k^*|}{8}\right) \\
	&\le \frac{2\exp\left(-\frac{\gamma^4}{26}\left(a-\frac 1 2\right)^2\alpha\right)}{1-\exp\left(-\frac{\gamma^4}{26}\left(a-\frac 1 2\right)^2\right)} + \frac{2\exp\left(-\frac{\epsilon\gamma}{8}\left(a-\frac 1 2\right)\alpha\right)}{1-\exp\left(-\frac{\epsilon\gamma}{8}\left(a-\frac 1 2\right)\right)} \notag \notag \\
	&\le \beta
	\end{align*}
We bound each term above by $\beta/2$. Rearranging shows that our accuracy result will hold for 
\begin{align*} 
\alpha\ge \max\Bigg\{\frac{26}{\gamma^4\left(a-1/2\right)^2}\Bigg( \log\frac{4}{\beta}+\log\frac{1}{1-\exp\left(-\frac{\gamma^4}{26}\left(a-\frac 1 2\right)^2\right)}\Bigg), 
\\
\frac{8}{\epsilon\gamma\left(a-1/2\right)}\Bigg( \log\frac{4}{\beta}+\log\frac{1}{1-\exp\left(-\frac{\epsilon\gamma}{8}\left(a-\frac 1 2\right)\right)}\Bigg) \Bigg\}.
\end{align*}
We achieve our final bound by simplifying the above expression as follows. For the first term, we observe that  $\gamma<1/2, a<1$ implies $x=\gamma^4(a-1/2)^2/26\le 1/1664$, and for small $x$ we have $\log(1/(1-\exp(-x)))\le 2\log(1/x)$. For any $c>0$, we have $\log (1/x)\le C(1/x)^c$ for any $1/x\ge 1664$ and $C\ge (\log 1664)/(1664^c)$, which can be applied to get our final bound. For the second term, we observe that $x=\epsilon\gamma(a-\frac 1 2)/8 \le \epsilon/32$. When $\epsilon$ is small and the corresponding $x \le 4/5$, we have $\log(1/(1-\exp(-x)))\le 2\log(1/x)$, and for any $c>0$, we have $\log (1/x)\le C(1/x)^c$ for any $1/x\ge 5/4$ and $C\ge (\log 4/5)/((4/5)^c)$. When $\epsilon$ is large and the corresponding $x>4/5$, we have $\log(1/(1-\exp(-x)))\le \log2$, which can be incorporated into the constant in our final bound.

\end{proof}

\section{Online change point detection}\label{s.online}

In this section, we show how to extend our results for change-point detection with unknown distributions to the \emph{online setting}, where the database $X$ is not given in advance, but instead data points arrive one-by-one.  We assume the analyst starts with a database of size $n$, and receives one new data point per unit time.  

Our algorithm uses the Above Noisy Threshold algorithm of \cite{DNRRV09,DNPR10} (\AboveThreshold, Algorithm \ref{alg.ant}) instantiated with queries of the Mann-Whitney test statistic $V(k)$ in the center of a sliding window of the most recent $n$ points. With each new data point $k>n$, the algorithm computes $V(k)$ for database $X=\set{x_{k-n/2+1},\ldots x_{k+n/2}}$, and compares this statistic against a noisy threshold for significance.  When this statistic is sufficiently high, the online algorithm calls the offline algorithm \PNCPD\ on this window to estimate $k^*$.  For simplicity in indexing and to avoid confusion with the notation of the previous section, we define $U(k) =V(k)$ when $V(k)$ is taken over database $X$ for each $k>n/2$.  Since the algorithm only checks for a change-point in the middle of the window, we assume that $k^* \geq n/2$ to ensure that the change-point does not occur too early to be detected.

We note that the offline subroutine \PNCPD\ assumes that a change point occurs sometime after the first $\gamma n$ and before the last $\gamma n$ of the $n$ data points on which it is called. We will show that for an appropriate choice of $T$, \OnlinePNCPD\ exceeds $\hat T$ for some $k$ such that $k^*\in[k,k+n/2]$. Therefore, by waiting for an additional $\gamma n$ data points, we ensure that the assumptions of \PNCPD\ are met as long as $\gamma<1/4$.

 \begin{algorithm}[H]
 \caption{Online Private Nonparametric Change-Point Detector: \OnlinePNCPD($X,n,\epsilon,\gamma,T$)}
 \begin{algorithmic}
\State  \textbf{Input:} Data stream $X$, starting size $n$ , privacy parameter $\eps$, constraint parameter $\gamma$, threshold $T$. 
\State Let $\hat T = T + \Lap\left(\frac{8}{\epsilon  n}\right)$
\For{ each new data point $x_{k+n/2}$, $k>n/2$} 
\State Compute $ U(k) =\frac{4}{n^2}\sum_{j=k+1}^{k+n/2} \sum_{i=k-n/2+1}^{k} I(x_i>x_j) $ 
\State Sample $Z_k \sim \Lap(\frac{16}{\epsilon n})$
\If{$U(k)+Z_k>\hat T$}
\State Wait for $\gamma n$ new data points to arrive
\State Output $\PNCPD\left(\set{x_{k-n/2+1+\gamma n}, \ldots x_{k+n/2+\gamma n}},\eps/2,\gamma\right)$ 
\State Halt
\EndIf
\EndFor
\end{algorithmic}\label{algo.online}
\end{algorithm}



Privacy follows immediately from the privacy guarantees of \AboveThreshold\ and \PNCPD.

\begin{theorem}\label{thm.onlinepriv}
For arbitrary data stream $X$ with starting size $n$, privacy parameter $\eps>0$, and constraint $\gamma\in(0,1/2)$, \OnlinePNCPD$(X,n,\eps,\gamma)$ is $\epsilon$-differentially private.
\end{theorem}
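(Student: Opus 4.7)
The plan is to decompose \OnlinePNCPD\ into two subroutines, each of which is $\epsilon/2$-differentially private, and then invoke basic composition (Theorem~\ref{thm.basic}). The first subroutine is the \AboveThreshold\ instance that monitors the stream of sliding-window Mann--Whitney statistics $U(k)$ until one exceeds the noisy threshold $\hat T$. The second subroutine is the invocation of \PNCPD\ with privacy parameter $\epsilon/2$ on the window of size $n$ ending $\gamma n$ steps after the triggering index. Since \PNCPD\ is $\epsilon/2$-DP by Theorem~\ref{thm.offpriv} and is applied to a post-processed function of the output of the first stage together with fresh data, it contributes $\epsilon/2$ to the overall privacy budget.

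The central step will be bounding the sensitivity $\Delta(U)$ of each query, so that the prescribed Laplace scales match the \AboveThreshold\ template. The argument mirrors the sensitivity calculation inside the proof of Theorem~\ref{thm.offpriv}, but on a window of length $n$ centered at $k$. A change to a single stream entry $x_t$ affects $U(k)$ only when $k - n/2 + 1 \le t \le k + n/2$. If $t \le k$, the difference $|U(k)-U'(k)|$ is bounded by $\tfrac{4}{n^2}\sum_{j=k+1}^{k+n/2}\bigl|I(x_t>x_j)-I(x_t'>x_j)\bigr| \le \tfrac{4}{n^2}\cdot\tfrac{n}{2} = \tfrac{2}{n}$, and a symmetric bound holds when $t>k$. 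Hence $\Delta(U) = 2/n$.

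With $\Delta(U)=2/n$ and target privacy $\epsilon/2$ for the first stage, the \AboveThreshold\ template of Theorem~\ref{thm.antpriv} calls for threshold noise $\mathrm{Lap}(2\Delta(U)/(\epsilon/2)) = \mathrm{Lap}(8/(\epsilon n))$ and per-query noise $\mathrm{Lap}(4\Delta(U)/(\epsilon/2)) = \mathrm{Lap}(16/(\epsilon n))$, which is exactly what Algorithm~\ref{algo.online} uses. Therefore Theorem~\ref{thm.antpriv} yields $\epsilon/2$-DP for the first stage. Basic composition with the $\epsilon/2$-DP second stage then gives the claimed $\epsilon$-DP.

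I do not expect any serious obstacle; the proof is essentially bookkeeping. The only subtlety worth flagging is that \AboveThreshold\ is stated for a fixed database against a stream of queries, whereas here the relevant window slides with $k$; one resolves this by treating the entire stream as the underlying dataset and letting each query $U(k)$ be a function of the appropriate slice, whereupon the uniform sensitivity bound $\Delta(U)=2/n$ suffices and the standard \AboveThreshold\ analysis applies without modification.
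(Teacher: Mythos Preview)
Your proposal is correct and follows essentially the same approach as the paper: decompose \OnlinePNCPD\ into an \AboveThreshold\ instance with privacy parameter $\epsilon/2$ (using the sensitivity bound $\Delta(U)=2/n$) composed with a call to \PNCPD\ with privacy parameter $\epsilon/2$, then apply Basic Composition. Your write-up is in fact more thorough than the paper's, since you explicitly verify that the Laplace scales $8/(\epsilon n)$ and $16/(\epsilon n)$ match the \AboveThreshold\ template and you flag the sliding-window-vs.-fixed-database subtlety, which the paper leaves implicit.
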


\begin{proof}
By Theorem \ref{thm.antpriv}, \AboveThreshold\ is $\epsilon$-differentially private, and by Theorem \ref{thm.offpriv}, the statistics $V(k)$ and $U(k)$ have sensitivity $2/ n$.  Also by Theorem \ref{thm.offpriv}, \PNCPD\ is $\epsilon$-differentially private. Thus the algorithm \OnlinePNCPD\ is simply \AboveThreshold\ instantiated with privacy parameter $\epsilon/2$, composed with \PNCPD\ also instantiated with privacy parameter $\epsilon/2$.  By Basic Composition (Theorem \ref{thm.basic}), \OnlinePNCPD$(X,n,\eps,\gamma)$ is $\epsilon$-differentially private.
\end{proof}

To give accuracy bounds on the performance of \OnlinePNCPD, we need to bound several sources of error.  First we need to set the threshold $T$ such that the algorithm will not raise a false alarm before the change-point occurs (i.e., control the false positive rate) and that the algorithm will not fail to raise an alarm on a window containing the true change-point (i.e., control the false negative rate).  This must be done taking into account the additional error from the private \AboveThreshold\ subroutine.  Finally, we can use the accuracy guarantees of \PNCPD\ to show that conditioned on calling a window that contains the true change-point, we are likely to output an estimator $\hat{k}$ that is close to the true change-point $k^*$. 

\begin{theorem}\label{online.acc}
For data stream $X$ with starting size $n$ drawn according to the change-point model with any distributions $P_0,P_1$ with $a=\Pr_{x\sim P_0,y\sim P_1}[x>y]>1/2$, constraint $\gamma\in(0,1/4)$, change-point $k^*\ge n/2$, privacy parameter $\eps>0$, and threshold $T\in[T_L,T_U]$ such that
	\begin{align*}
	    T_L&= \frac{1}{2}+ \sqrt{\frac{2}{n}\log\left( \frac{8(k^*-n/2)}{\beta} \right)}  + \frac{32\log( (k^*-n/2)/\beta)}{ n\epsilon} \\
	    T_U&= a - \sqrt{\frac{2}{n}\log\left( \frac{8}{\beta} \right)}-\frac{32\log(8(k^*-n/2)/\beta)}{ n\epsilon},
	\end{align*}
we have that \OnlinePNCPD($X,n,\eps,\gamma,T$) is $(\alpha,\beta)$-accurate for any $\beta>0$ and 
	\begin{equation*}
	\alpha=\max\left\{C_1\cdot \left(\frac{1}{\gamma^4 (a-1/2)^2}\right)^c\cdot \log \frac n \beta, C_2\cdot \left(\frac{1}{\epsilon\gamma (a-1/2)}\right)^c\cdot \log \frac n \beta \right\}, 
	\end{equation*}
	for any constant $c>1$ and some constants $C_1, C_2>0$ which depend only on $c$. 
\end{theorem}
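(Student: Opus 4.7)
The plan is to decompose the accuracy proof into three parts mirroring the three sources of error in \OnlinePNCPD: avoiding false positives from \AboveThreshold\ before the change-point, ensuring a true positive fires once the change-point has entered the sliding window, and finally inheriting the offline accuracy of \PNCPD\ on the window eventually passed to it. Because the queries $U(k)$ depend on the random data in addition to the Laplace noises $Z_k$ and $Z_T$, Theorem~\ref{thm.atacc} cannot be invoked as a black box; instead I will combine a McDiarmid-style concentration bound on $U(k)$ (exactly as in the proof of Theorem~\ref{nonprivate.acc}) with direct Laplace tail bounds.

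For the false-positive step, fix $k$ with $k + n/2 < k^*$. Then the full window $\{x_{k-n/2+1},\dots,x_{k+n/2}\}$ is drawn from $P_0$, so $\E[U(k)] = 1/2$; since each entry's discrete derivative on $U(k)$ is at most $2/n$, Theorem~\ref{lem.mcd} gives $\Pr[U(k) - 1/2 > s] \le \exp(-ns^2/2)$. Taking a union bound over the at most $k^* - n/2$ such iterations and combining with the tails $\Pr[|Z_k| > t_k] \le \exp(-n\epsilon t_k/16)$ and $\Pr[|Z_T| > t_T] \le \exp(-n\epsilon t_T/8)$, one can allocate $s, t_k, t_T$ so that each contributes a $\beta/\text{const}$ failure mass and the condition $T \ge 1/2 + s + t_k + t_T$ suffices to preclude firing at any $k < k^* - n/2$; rearranged, this is exactly $T \ge T_L$.

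For the true-positive step, I focus on the iteration $k = k^* - 1$. Its window contains exactly $n/2$ samples from $P_0$ (indices $\le k^*-1$) and $n/2$ samples from $P_1$ (indices $\ge k^*$), so $\E[U(k^*-1)] = a$. A single-shot application of Theorem~\ref{lem.mcd} together with Laplace tails for $Z_{k^*-1}$ and $Z_T$ shows that with probability at least $1-\beta/4$ we have $U(k^*-1) + Z_{k^*-1} > T + Z_T$ whenever $T \le a - s - t_k - t_T$ for analogous parameters, which matches $T \le T_U$. Hence \AboveThreshold\ halts at some $k \in [\max\{n/2+1,\,k^* - n/2\},\, k^*-1]$. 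Because $\gamma < 1/4$, this $k$ lies in the interval $[k^* - n/2,\, k^* + n/2 - 2\gamma n]$, which is precisely the range ensuring that the subsequent call $\PNCPD(\{x_{k-n/2+1+\gamma n},\dots,x_{k+n/2+\gamma n}\}, \epsilon/2, \gamma)$ receives an input whose local change-point index lies in $[\gamma n, (1-\gamma)n]$, satisfying the hypotheses of Theorem~\ref{private.acc}.

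Conditioned on firing at such $k$, applying Theorem~\ref{private.acc} with privacy parameter $\epsilon/2$ and failure $\beta/2$ gives an estimate within $\alpha$ of $k^*$, where the factors of $2$ from privacy splitting and the extra $\log n$ factor (coming from the union bound over iterations before firing) are absorbed into the constants $C_1, C_2$ and into the $\log(n/\beta)$ parameter. A final union bound over the three failure events (false positive, no firing by $k^*-1$, and \PNCPD\ error) yields the stated $(\alpha,\beta)$-accuracy. I expect the most delicate step to be the false-positive control: the count of early iterations scales like $k^* - n/2$ and enters $T_L$ logarithmically through \emph{both} the McDiarmid and Laplace union bounds, so getting the three parameters $s, t_k, t_T$ to combine into exactly the stated closed-form $T_L$ requires careful bookkeeping of constants.
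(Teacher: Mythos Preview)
Your overall decomposition into false-positive control, true-positive firing, and offline accuracy is correct and matches the paper. Two points deserve comment.

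First, a minor one: the paper does invoke Theorem~\ref{thm.atacc} as a black box. The trick is to separate randomness sources---first use McDiarmid to show that with probability $1-\beta/4$ the \emph{realized} values $U(k)$ satisfy $U(k)<T_L'$ for all pre-change windows and $U(k^*)>T_U'$, and then, on this event, apply Theorem~\ref{thm.atacc} (whose guarantee is worst-case over query values and whose randomness is only the Laplace noise, independent of the data). Your direct Laplace-tail argument also works and is equivalent, but the claim that Theorem~\ref{thm.atacc} \emph{cannot} be used is not right. Also, the correctly centered iteration under the paper's convention $x_1,\dots,x_{k^*}\sim P_0$ is $k=k^*$, not $k^*-1$.

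Second, and this is a genuine gap: your third step says ``Conditioned on firing at such $k$, applying Theorem~\ref{private.acc} with failure $\beta/2$.'' This does not work. The event ``\AboveThreshold\ fires at window $k$'' depends on the data in that window (through $U(k')$ for $k'\le k$), so conditioning on it destroys the i.i.d.\ change-point-model distribution that Theorem~\ref{private.acc} requires. The paper explicitly flags this: ``conditioning on raising an alarm and calling \PNCPD, the data points in the chosen window are no longer distributed according to the change-point model.'' The fix is to avoid conditioning altogether. Writing $W(k)$ for the event that the algorithm fires at iteration $k$, one bounds
\[
\sum_{k\in(k^*-n/2,\,k^*]}\Pr\bigl[W(k)\cap\{|\tilde k-k^*|>\alpha\}\bigr]
\;\le\;\sum_{k\in(k^*-n/2,\,k^*]}\Pr\bigl[\PNCPD\text{ on window }k\text{ fails}\bigr]
\;\le\;\frac n2\cdot\Pr[\PNCPD\text{ fails}],
\]
where for each fixed $k$ the unconditional failure probability of \PNCPD\ on that window is controlled by Theorem~\ref{private.acc}. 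This forces the per-window failure to be $\le\beta/n$, not $\beta/2$, and \emph{that} is the source of the $\log(n/\beta)$ in the final $\alpha$. Your attribution of the extra $\log n$ to ``the union bound over iterations before firing'' is mistaken: those $k^*-n/2$ iterations are already absorbed into $T_L$ and do not enter $\alpha$.
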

%
\begin{proof}

First, we find an interval $[T_L, T_U]$ for the threshold $T$ that ensures that the algorithm neither calls \PNCPD\ before the true change-point has occurred nor fails to call \PNCPD\ on the window containing $k^*$ somewhere in the middle $(1-2\gamma)n$ data points.  
For now we will ignore the error from $\AboveThreshold$, and use $T_L', T_U'$ to denote the desired thresholds ignoring this additional source of noise. For ease of notation and reindexing, we define $U(k) = V(k)$ when $V(k)$ is computed over database $X=\set{x_{k-n/2+1}, \ldots x_{k+n/2}}$ for the Mann-Whitney test statistic $V(\cdot)$ as defined in Equation \eqref{eq.Vk}.

Thus we aim to find a range $[T_L^\prime, T_U^\prime]$  such that 
 \begin{align}
     & \Pr[U(k)>T_L^\prime | X_{k-n/2+1}, \ldots X_{k+n/2} \sim P_0] \le \frac{\beta}{8(k^* - n/2)},\label{T_l}\\
     & \Pr[U(k)<T_U^\prime | X_{k-n/2+1}, \ldots X_{k} \sim P_0,\; X_{k+1}, \ldots X_{k+n/2} \sim P_1] \le \frac{\beta}{8} \label{T_u}.
 \end{align}
 Condition \eqref{T_l} means that after taking a union bound over all the windows that do not contain $k^*$, the probability that $\AboveThreshold$ raises the alarm on the window that does not contain the true change point $k^*$ does not exceed $\beta/8$.  Condition \eqref{T_u} means that on the window containing the true change-point $k^*$ in the center of the window, $\AboveThreshold$ will fail to raise the alarm with probability at most $\beta/8$.


It will be helpful to have high probability bounds that the test statistics $U(k)$ are close to their means.  Using McDiarmid's Inequality (Theorem \ref{lem.mcd}) we can obtain that for any $k>n$
\begin{align}
\Pr[U(k)-\E[U(k)]>t]\le \exp(-t^2n/2),\label{mc_l}\\ 
\Pr[U(k)-\E[U(k)]<-t]\le \exp(-t^2n/2) \label{mc_u}
\end{align}

Using these bounds, we will first find $T_L^\prime$. Note that Condition \eqref{T_l} on $T_L'$ considers the setting where all points in the current window are drawn from $P_0$.  Under this condition, $\E[U(k)]=1/2$. Then by plugging in $t=T_L^\prime-1/2$ into Inequality \eqref{mc_l}, we get the following expression:
\begin{align*}
\Pr\left[U(k)\ge T_L^\prime | X_{k-n/2+1}, \ldots X_{k+n/2} \sim P_0\right]\le \exp\left(-\frac{n}{2}\left(T_L'-\frac{1}{2}\right)^2\right) 
\end{align*}
Setting the right hand side of this to less than or equal to $\frac{\beta}{8(k^* - n/2)}$ and solving for $T_L'$ gives the following lower bound, which satisfies Condition \eqref{T_l}:
\begin{align*}
T_L^\prime =\frac{1}{2}+ \sqrt{\frac{2}{n}\log\left( \frac{8(k^*-n/2)}{\beta} \right)}.
\end{align*}

Next we find the upper bound $T_U$. Note that Condition \eqref{T_u} on $T_U'$ considers the setting where the first $n/2$ points in the window are drawn from $P_0$ and the remaining $n/2$ points are drawn from $P_1$.  Under this condition, $\E[U(k)]=a$. Then plugging $t=a-T_U^\prime$ in Inequality \eqref{mc_u} and using Condition \eqref{T_u}, we get the following bound:
 \begin{align*}
     \Pr[U(k)\le T_U^\prime | X_, \ldots X_{n/2} \sim P_0,\; X_{n/2+1}, \ldots X_{n} \sim P_1] \leq \exp\left(-(a-T_U')^2n/2 \right) \le \frac{\beta}{8}. 
 \end{align*}
 Solving this for $T_U'$ gives the following Inequality which satisfies Condition \eqref{T_u}:
 \begin{equation*}
     T_U^\prime \leq a - \sqrt{\frac{2}{n}\log\left( \frac{8}{\beta} \right)}.
 \end{equation*}
 
We now return to account for the error from $\AboveThreshold$.  To ensure that this error does not cause a window to be called before the true change-point and also does not skip the window with the true change-point, we require the following conditions to both hold with probability $\frac{\beta}{4}$
 \begin{align*}
     &\text{For } T \ge T_L,\;\;U_k<T-\alpha^\prime \text{ when } k<k^*\\ 
     &\text{For } T \le T_U,\;\;U_{k^*}>T+\alpha^\prime 
 \end{align*}
 Thus we obtain that the new interval for $T$ is $[T_L,T_U]$, where $T_L=T_L^\prime+\alpha^\prime$, and $T_U=T_U^\prime-\alpha^\prime$.
 If both those conditions hold then for $\alpha^\prime=\frac{32\log(8(k^*-n/2)/\beta)}{ n\epsilon}$, $\AboveThreshold$ will identify the window which contains the true change point with probability $\left(1-\beta/4\right)$ by Theorem \ref{thm.atacc}. Taking a union bound over the failure probabilities of Conditions \eqref{T_l} and \eqref{T_u}, and the statement above, we can see that $\OnlinePNCPD$ will call $\PNCPD$ on the right window except with small probability $\beta/2$. 

Finally, we can use the accuracy guarantees of \PNCPD\ to show that conditioned on raising an alarm in the correct window, we are likely to output an estimate $\hat{k}$ that is close to the true change-point $k^*$. Slightly more careful accounting is needed here, because conditioning on raising an alarm and calling \PNCPD, the data points in the chosen window are no longer distributed according to the change-point model. 
Let $W(k)$ denote the event that $\OnlinePNCPD$ calls $\PNCPD(\set{x_{k-n/2+1+\gamma n}, \ldots x_{k+n/2+\gamma n}},\eps/2,\gamma)$ on the window centered at $k$. Then 
 \begin{align*}
     \Pr\left[\abs{\tilde{k}-k^*}>\alpha\right] &= \sum_{k>n/2}\Pr\left[W(k)\cap \{|\tilde{k}-k^*|>\alpha\}\right] \\
     &\le \sum_{k \notin (k^*-n/2,k^*]}\Pr\left[W(k)\right]+\sum_{k \in (k^*-n/2,k^*]}\Pr\left[W(k)\cap \left\{\abs{\tilde{k}-k^*}>\alpha\right\}\right] \\
     &\le \frac{\beta}{2} + \frac{n}{2} \Pr\left[\PNCPD \text{ fails}\right]<\beta
 \end{align*}
 To achieve the inequality above, the probability of $\PNCPD$ fails to report the change point within the $\alpha$-window around $k^*$ has to be bounded by $\beta/n$. Thus by Theorem \ref{private.acc} we set the error to be,
 \begin{equation*}
	\alpha=\max\left\{C_1\cdot \left(\frac{1}{\gamma^4 \left(a-1/2\right)^2}\right)^c\cdot \log \frac n \beta, C_2\cdot \left(\frac{1}{\epsilon\gamma \left(a-1/2\right)}\right)^c\cdot \log \frac n \beta \right\},
	\end{equation*}
	for any constant $c>1$ and some constant $C_1, C_2>0$ depending on $c$.
	 \end{proof}
	 
We have proved the theorem, but we should also show that the window $[T_L,T_U]$ is non-empty, and there exists a good range in which to choose the threshold $T$.   The condition that $T_L < T_U$ is equivalent to, 	
\begin{align}
   a -\frac{1}{2} > \sqrt{\frac{2}{n}\log\left( \frac{8(k^*-n/2)}{\beta} \right)}+\sqrt{\frac{2}{n}\log\left( \frac{8}{\beta} \right)}+\frac{64\log(8(k^*-n/2)/ \beta)}{ n\epsilon}.\label{inq1}
\end{align}
We can simplify Inequality \eqref{inq1} as,
\begin{align*}
  \sqrt{\frac{2}{n}\log\left( \frac{8(k^*-n/2)}{\beta} \right)}+\sqrt{\frac{2}{n}\log\left( \frac{8}{\beta} \right)}+\frac{64\log(8(k^*-n/2)/ \beta)}{ n\epsilon} \\ < 
  \sqrt{\frac{2}{n}\log\left( \frac{8k^*}{\beta} \right)}+\sqrt{\frac{2}{n}\log\left( \frac{8}{\beta} \right)}+\frac{64\log(8k^*/ \beta)}{ n\epsilon} <a - \frac 1 2.
\end{align*}
Finally, solving the right hand side for $n$, we find the following bound on $n$ that satisfies Inequality \eqref{inq1}.
\begin{align*}
  n> \frac{1}{(a-1/2)^2} \left(  \sqrt{2 \log{\left(\frac{8k^*}{\beta}\right)}} +\sqrt{2\log \left( \frac{8}{\beta}\right)} + \frac{64}{\epsilon} \log \left(\frac {8k^*} \beta\right)    \right)^2.
\end{align*}

For any starting database size that is at least this large (only $n=\Omega((\frac{\log (k^*/\beta)}{\eps(a-1/2)})^2)$
), the acceptable region $[T_L,T_U]$ for a threshold $T$ will be non-empty. Moreover, the $\log k^*$ dependence of $T_L$ and $T_U$ means that only a rough estimate of the true change-point is necessary in practice to choose an acceptable threshold $T$.
 

\section{Application: Drift Change Detection}\label{s.drift}
In this section, we extend our consideration of the change-point problem to the setting where data are not
sampled i.i.d. from fixed pre- and post-change distributions, but instead are sampled from distributions that
are changing smoothly over time. In particular, we consider distributions with \emph{drift}, where the parameter of
the distribution changes linearly with time, and the rate of linear drift changes at the change-point.  Since the samples are not i.i.d., we consider differences between successive pairs of samples in order to apply the algorithms from the previous sections. 


The {\em drift change detection problem} is parametrized by error terms $e_t$ independently sampled from a mean-zero distribution $\mathcal{S}$, two drift terms $\xi_0$ and $\xi_1$, a drift change-point $t^*\in[n]$, and a mean $\eta$ associated with $t^*$. Independent random variables $X=\set{x_1,\dots,x_n}$ are said to be drawn from the drift change detection model if we can write 
\begin{equation*}
    x_t = \mu_t + e_t, 
\end{equation*}
for $\mu_t$ piecewise linear as follows:
\begin{equation*}
\mu_t = \begin{cases}
\eta - (t^*-t)\xi_0 & t\le t^* \\
\eta + (t-t^*)\xi_1 & t> t^*
\end{cases}. 
\end{equation*}
%
Our goal is to detect the drift change-point $t^*$ with the smallest possible error. 

In order to apply our algorithms which require i.i.d.~samples, we will transform the sample $X$ by considering differences of consecutive pairs of $x_i$.  These differences are i.i.d.~with mean $\xi_0$ before $t^*$, and i.i.d.~with mean $\xi_1$ after $t^*$, and we can now apply \PNCPD\ to this instance of change-point detection. For ease of presentation, we will assume $n$ is even and $t^*$ is odd.

Formally, define a new sample $Y = \{y_1, \ldots, y_{n/2}\}$ with sample points $y_t = x_{2t}-x_{2t-1}$, for $t=1,\ldots n/2$. Then we have 
\begin{equation*}
    y_t =
    \begin{cases}
     \xi_0 + e_{2t}-e_{2t-1}, & \text{for } t=1,\ldots, \frac{t^*-1}{2} , \\
     \xi_1 + e_{2t}-e_{2t-1}, & \text{for } t=\frac{t^*+2}{2}, \ldots, \frac{N}{2}.
    \end{cases}
\end{equation*}
Note that random variables $(e_{2t}-e_{2t-1})$ are independent and identically distributed. 
Thus the $y_t$ are independent, and they are sampled from a fixed distribution before the change point, and from another distribution after the change-point. We can then apply the \PNCPD\ algorithm and privately estimate the drift change-point $\hat{t}$ as twice the output of \PNCPD($\set{y_1,\dots,y_{n/2}},\eps,\gamma$).  This estimation procedure will inherit the privacy and accuracy results of Theorems \ref{thm.offpriv} and \ref{private.acc}.\footnote{This procedure finds a change-point in the sample $Y$, which corresponds to a pair $(x_{2t-1}, x_{2t})$ such that one of them is the estimated change point. Under our assumption that $t^*$ is odd, we should output $\hat{t} = 2t-1$. If $t^*$ is even, then the estimated change-point may be off by one, and $y_{t^*/2}$ is distributed differently than other data points. However, since the $\PNCPD$ algorithm is differentially private, its performance is guaranteed be in insensitive to a single outlier in the database, so this fact will not affect the result of the algorithm by too much.}

As a concrete example, consider points sampled from a Gaussian distribution with mean $\mu_t = \xi_0 t +\eta_0$ and standard deviation $\sigma$ for $t \leq t^*$, and from a Gaussian distribution with mean $\mu_t = \xi_1 t +\eta_1$ and standard deviation $\sigma$ for $t > t^*$.  Then $y_t=x_{2t}-x_{2t-1}$ will be Gaussian with variance $2\sigma^2$ and mean $\xi_0$ before the change-point and $\xi_1$ after it.  If any of the parameters $\xi_0, \xi_1$, or $\sigma$ are unknown, this would require nonparametric change-point estimation.

\begin{corollary} \label{smooth.theorem}
For data $X=\set{x_1,\dots,x_n}$ drawn according to the drift change model with  drift terms $\xi_0> \xi_1$,
 constraint $\gamma\in(0,1/2)$, drift change time $t^*\in(\ceil{\frac{\gamma}{2} n}\ldots \ceil{(1-\frac{\gamma}{2})n})$, and privacy parameter $\eps>0$,  
  there exists an $\epsilon$-differentially private nonparametric change point estimator that is $(\alpha, \beta)$-accurate for any $\beta > 0$ and 
	\begin{equation*}
	\alpha=\max\left\{C_1\cdot \left(\frac{1}{\gamma^4 (a-1/2)^2}\right)^c\cdot \log \frac 1 \beta, C_2\cdot \left(\frac{1}{\epsilon\gamma (a-1/2)}\right)^c\cdot \log \frac 1 \beta \right\}, 
	\end{equation*}
	for any constant $c>1$ and some constant $C_1, C_2>0$ depending on $c$.
\end{corollary}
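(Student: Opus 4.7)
The plan is to reduce the drift change detection problem to an instance of the i.i.d.\ nonparametric change-point problem for which Theorems~\ref{thm.offpriv} and \ref{private.acc} already apply, and then translate the guarantees back through the reduction. First I would define the transformed sample $Y=\{y_1,\dots,y_{n/2}\}$ with $y_t=x_{2t}-x_{2t-1}$ exactly as in the preceding discussion. Under the drift change model, the random variables $e_{2t}-e_{2t-1}$ are i.i.d.\ (as the $e_t$ are i.i.d.\ from $\mathcal{S}$), so the points in $Y$ that lie entirely before the change fall i.i.d.\ from a distribution $P_0^Y$ with mean $\xi_0$, and those entirely after fall i.i.d.\ from a distribution $P_1^Y$ with mean $\xi_1$. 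Because $\xi_0>\xi_1$ and the error distribution $\mathcal{S}$ is mean-zero, one checks that $a:=\Pr_{y\sim P_0^Y,y'\sim P_1^Y}[y>y']>1/2$, so $Y$ satisfies the hypotheses of our offline results with a change-point at the corresponding index $k^*:=(t^*+1)/2$ (using the assumption $t^*$ is odd; the even case is handled via the footnote).

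For privacy, I would observe that the map $X\mapsto Y$ is \emph{neighbor-preserving}: changing a single entry of $X$ changes at most one entry of $Y$. Therefore \PNCPD($Y,\eps,\gamma$) applied to the transformation is $\eps$-differentially private with respect to $X$ by Theorem~\ref{thm.offpriv} together with the post-processing property of differential privacy (multiplying the returned index by $2$ is just post-processing).

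For accuracy, I would invoke Theorem~\ref{private.acc} on the database $Y$ of size $n/2$ with the same constraint $\gamma$ and privacy parameter $\eps$, using the value of $a$ identified above. Under the range constraint on $t^*$ in the corollary, $k^*$ falls in the admissible window $\{\lceil \gamma(n/2)\rceil,\dots,\lfloor(1-\gamma)(n/2)\rfloor\}$ for \PNCPD\ on $Y$ (possibly after a harmless constant rescaling of $\gamma$), so Theorem~\ref{private.acc} yields that the output $\hat k$ satisfies $|\hat k - k^*|\le \alpha'$ with probability at least $1-\beta$, where $\alpha'$ is exactly the expression in the corollary statement up to constants. Setting $\hat t:=2\hat k$ gives $|\hat t-t^*|\le 2\alpha'+O(1)$, which is absorbed into the stated bound because $C_1,C_2$ are arbitrary constants (depending only on $c$).

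The main technical obstacle is handling the single ``straddling'' pair $y_{(t^*+1)/2}$, whose two components come one from pre-change and one from post-change, so this point is not i.i.d.\ with either side. I would resolve this by appealing to the insensitivity of \PNCPD\ to a single arbitrary entry: since the algorithm is $\eps$-differentially private, its output distribution changes by at most an $e^\eps$ factor if this single straddling $y$ is replaced by a clean i.i.d.\ draw from $P_0^Y$ or $P_1^Y$, after which Theorem~\ref{private.acc} applies directly. A secondary bookkeeping task is tracking the factor of $2$ between $Y$-indices and $X$-indices in the accuracy bound and verifying that the constraint $t^*\in(\lceil\tfrac{\gamma}{2}n\rceil,\lceil(1-\tfrac{\gamma}{2})n\rceil)$ maps into the admissible window for \PNCPD\ on $Y$; this is straightforward but must be done carefully to preserve the dependence on $\gamma$ in the final bound.
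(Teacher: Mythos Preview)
Your proposal is correct and follows essentially the same approach as the paper: the corollary is stated in the paper without a standalone proof, inheriting directly from the discussion preceding it, which is exactly the reduction $y_t=x_{2t}-x_{2t-1}$, followed by an appeal to Theorems~\ref{thm.offpriv} and~\ref{private.acc} on $Y$, with the straddling entry handled by differential privacy's inherent robustness to a single arbitrary data point (this is precisely the content of the paper's footnote).

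One small bookkeeping slip: under the paper's convention (and your own computation), when $t^*$ is odd there is \emph{no} straddling pair---the difference $y_{(t^*+1)/2}=x_{t^*+1}-x_{t^*}$ already has mean $\xi_1$, so the split in $Y$ is clean. The straddling issue arises only when $t^*$ is even, which is the case the footnote addresses. Also, the implication ``$\xi_0>\xi_1$ and $\mathcal{S}$ mean-zero $\Rightarrow a>1/2$'' is not quite automatic (it requires the symmetric variable $e_1-e_2-e_3+e_4$ to place some mass in $(-(\xi_0-\xi_1),\xi_0-\xi_1)$), but the paper does not verify this either and simply treats $a$ as a parameter in the final bound.
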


We note that this approach is not restricted solely to offline linear drift detection.  The same reduction in the online setting would allow us to use \OnlinePNCPD\ to detect drift changes online. Additionally, a similar approach could be used to for other types of smoothly changing data, as long as the smooth changes exhibited enough structure to allow for reduction to the i.i.d.~setting.  For example, if data were sampled of the form  $x_t = f(\mu_t + e_t)$ for any one-to-one function $f:\R\to\R$, we could define $y_t = f^{-1}(x_{2t})-f^{-1}(x_{2t-1})$, and these $y_t$s would again be i.i.d..  This includes random variables of the form $\exp(\mu_t+e_t)$, $\log(\mu_t+e_t)$, and arbitrary polynomials $(\mu_t+e_t)^k$ (where even-degree polynomials must be restricted to, e.g., only have positive range).

\section{Empirical Results}\label{s.sim}

We now report the results of an experiment on real data followed by Monte Carlo experiments designed to validate the theoretical results of previous sections.  We only consider our accuracy guarantees because the nature of differential privacy provides a strong worst-case guarantee for all hypothetical databases, and therefore is impractical and redundant to test empirically. Our simulations consider both offline and online settings for detecting a change in the mean of Gaussian distribution.


\subsection{Results of Offline Algorithm with Real Data}

First we illustrate the effectiveness of our offline algorithm on real data by applying it to a window of stock price data including a sudden drop in price, and we use it to determine approximately when this change-point occurred. We use a dataset from \cite{cao2018multi}, which contains stock price data over time, with prices collected every second over a span of 5 hours on October 9, 2012. We identified by visual inspection a window of $n=200$ seconds (indexed 6900 to 7100 in the dataset, reindexed 0 to 200 here) that appears to include a discrete change in distribution from higher mean price to lower mean price. 
We then calculated the argmax of the Mann-Whitney statistic $V(k)$ to identify the most likely change-point as time $\hat k=92$, assuming the pre-change data were drawn i.i.d. from one distribution and the post-change data were drawn i.i.d. from a distribution with lower mean. We used this estimate as the ground truth ($k^*=\hat k=92$) in error analysis of our private offline algorithm. We ran our \PNCPD\ algorithm with $\gamma=0.1$ on the selected dataset $10^3$ times for each privacy value $\eps=0.1,0.5,1$. Figure~\ref{fig:realdata}(a) plots the data in our selected window, and Figure~\ref{fig:realdata}(b) plots the empirical accuracy $\beta = \Pr[|\tilde{k}-k^\ast|>\alpha]$ as a function of $\alpha$ for our \PNCPD\ simulations.

\begin{center}
\begin{minipage}{.9\linewidth}
\begin{figure}[H]
	\centering
	\subfloat[][Data trajectory]{\includegraphics[width=.45\textwidth]{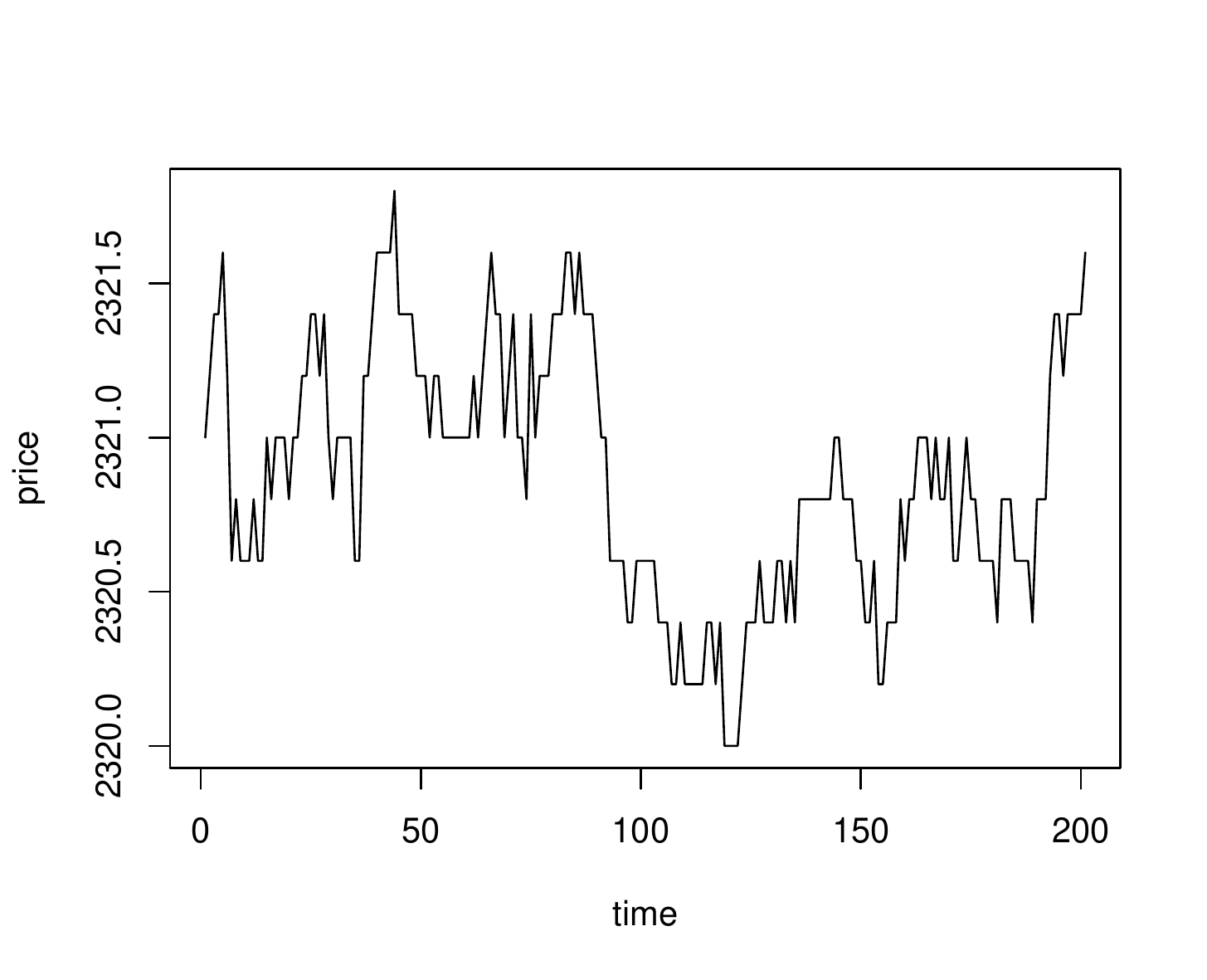}}
	\subfloat[][Accuracy of \PNCPD\ on data from (a)]{\includegraphics[width=.45\textwidth]{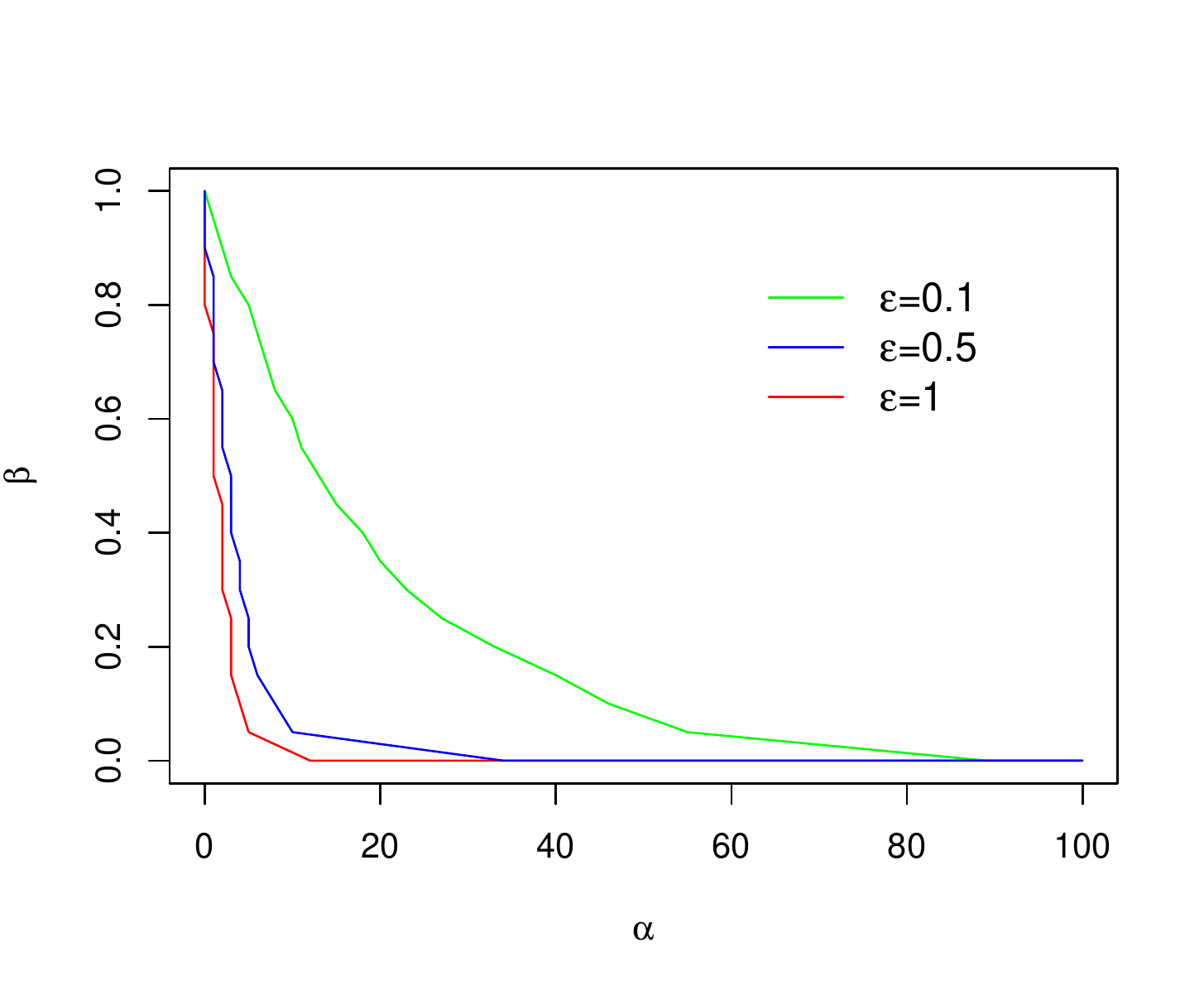}}
	\caption{\small Real data and accuracy results. 
	} 
	\label{fig:realdata}
\end{figure}
\end{minipage}
\end{center}


\subsection{Offline Results with Synthetic Data}

We now provide simulations of our algorithms using many synthetic datasets drawn exactly according to the change-point model. In the following simulations for \PNCPD, we use an initial distribution of $\mathcal N(0,1)$ and post-change distributions of the form $\mathcal N(\mu_1,1)$, considering both a small change $\mu_1=1$ and a large change $\mu_1=5$. We use $n=200$ observations where the true change occurs at time points $k^{*} = 50, 100, 150$. This process is repeated $10^3$ times for each value of $k^*$ and $\mu_1$. We consider the performance of our algorithm for $\gamma=0.1$ and $\eps=0.1,1,5,\infty$, where $\eps=\infty$ corresponds to the non-private problem, which serves as our baseline.  The results are summarized in Figure \ref{fig:offline}, which plots  the empirical probabilities $\beta = \Pr[|\tilde{k}-k^\ast|>\alpha]$ as a function of $\alpha$.


\begin{center}
\begin{minipage}{.9\linewidth}
\begin{figure}[H]
	\centering
	\subfloat[][$k^*=50$, $\mu_1=5$]{\includegraphics[width=.33\textwidth]{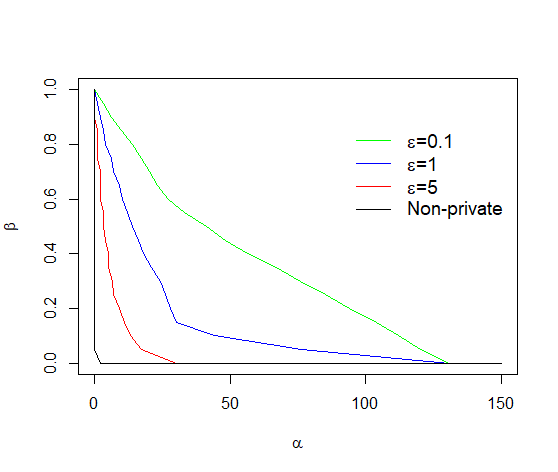}}
	\subfloat[][$k^*=100$, $\mu_1=5$]{\includegraphics[width=.33\textwidth]{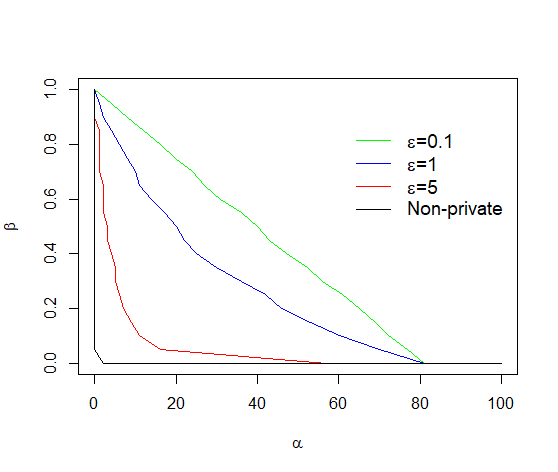}}
	\subfloat[][$k^*=150$, $\mu_1=5$]{\includegraphics[width=.33\textwidth]{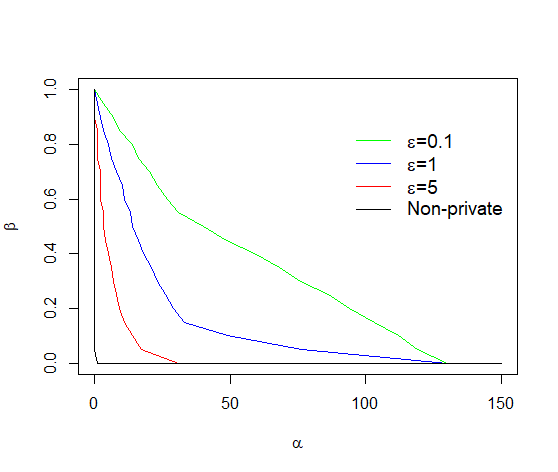}}\\
	\subfloat[][$k^*=50$, $\mu_1=1$]{\includegraphics[width=.33\textwidth]{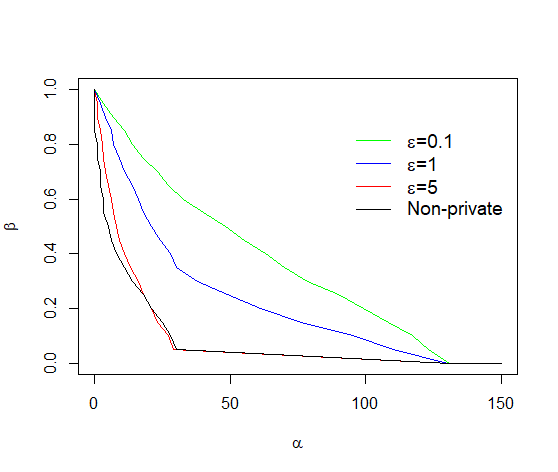}}
	\subfloat[][$k^*=100$, $\mu_1=1$]{\includegraphics[width=.33\textwidth]{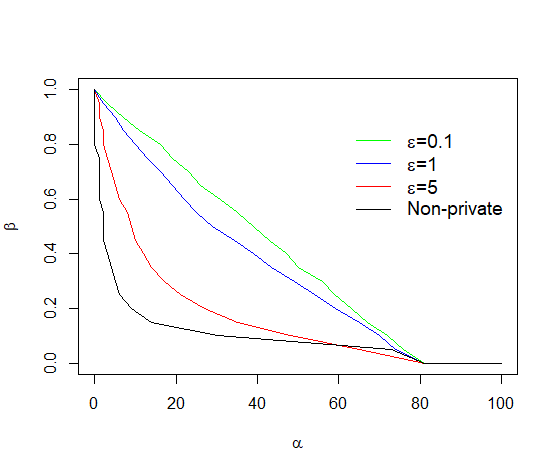}}
	\subfloat[][$k^*=150$, $\mu_1=1$]{\includegraphics[width=.33\textwidth]{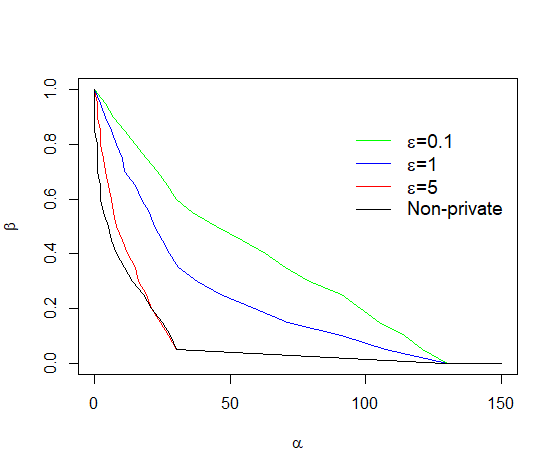}}
	\caption{\small Empirical accuracy $\beta = \Pr[|\tilde{k}-k^\ast|>\alpha]$ of \PNCPD~from Monte Carlo simulations using Gaussian data, where pre-change data are drawn from $\mathcal N(0,1)$ and post-change data are drawn from $\mathcal N(\mu_1,1)$.  Each simulation involves $10^3$ runs of \PNCPD~with varying $\eps$ on data generated by 200 i.i.d. samples from appropriate distributions with $\mu_1=1$ or $5$, and change point $k^*=50,100$, or $150$.
%
	} \label{fig:offline}
\end{figure}
\end{minipage}
\end{center}

As expected, the algorithm finds the change-point accurately, with better performance when the distributional change is larger or the $\epsilon$ value is larger.  Performance is slightly diminished when the change-point is at the center of the window, corresponding to $k^*=100$ in our experiments.  This is due to the scaling factor $\frac{1}{k(n-k)}$ in the expression of $V(k)$ as seen in Equation \eqref{eq.Vk}, which places relatively higher weight on $k$ that are close to the beginning and end of the window.  This scaling factor could be removed and our algorithm would still be differentially private and our accuracy result would (qualitatively) continue to hold for change-points near the center of the window. However, if an analyst already has reason to believe that the change-point occurs in the middle of her selected window, she is unlikely to need a change-point detection algorithm.

We also note that our simulations use slightly larger $\epsilon$ values and distributional changes than previous work on \emph{parametric} private change-point detection, where the pre- and post-change distributions are given explicitly as input to the algorithm \cite{CKM+18}.\footnote{The simulations of \cite{CKM+18} used $\epsilon=0.1, 0.5, 1$ and $\mu_1=0.5,1$, whereas we use $\epsilon=0.1, 1, 5$ and $\mu_1=1,5$.} This is to be expected since the nonparametric problem is information theoretically harder to solve, because the test statistic cannot be tailored to the pre- and post-change distributions.

To illustrate these accuracy guarantees, Figure \ref{fig:statistic} show the values of the true test statistic $V(k)$ and the noisy test statistic $V(k) + Z_k$ for the same distributions.  We still use $n=200$ observations and $k^*=50,100$ and $\mu_1=1,5$, and run the process only once for each pair of parameter values.  We note that for the chosen distributions, $a<1/2$ so our test statistic $V(k)$ should be minimized at $k^*$, and we use the variant of \PNCPD\ that outputs $\tilde{k}=\argmin \{V(k)+Z_k\}$ rather than the argmin as described in Algorithm \ref{algo.offline}. The smoother black line in the figures corresponds to the true test statistic $V(k)$ and the more jagged orange line corresponds to the noisy test statistic $V(k) + Z_k$ for $\epsilon=5$.  Figure \ref{fig:statistic} shows that in all cases, the true statistic is minimized at the true change $k^*$.  This is even more prominent when the distributional change is larger ($\mu_1=5$), so more noise can be tolerated. Under smaller distributional changes ($\mu_1=1$) the minimization of $V(k)$ around $k^*$ is less dramatic, and there is more opportunity for the noise terms $Z_k$ to introduce estimation error when minimizing the noisy statistic.  This also illustrates the structure of the proof of Theorem \ref{private.acc}, and in particular Equation \eqref{bad.each}, where we separate out the failure probability of the algorithm into two terms: the probability of bad data and the probability of bad draws from the Laplace distribution.  


\begin{center}
\begin{minipage}{.9\linewidth}
\begin{figure}[H]
	\centering
	\subfloat[][$k^*=50$, $\mu_1=5$]{\includegraphics[width=.33\textwidth]{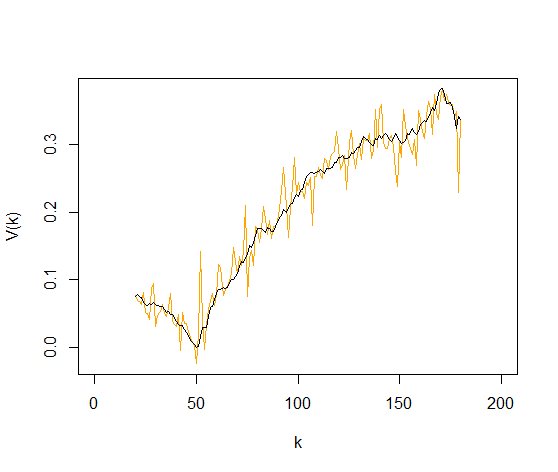}}
	\subfloat[][$k^*=100$, $\mu_1=5$]{\includegraphics[width=.33\textwidth]{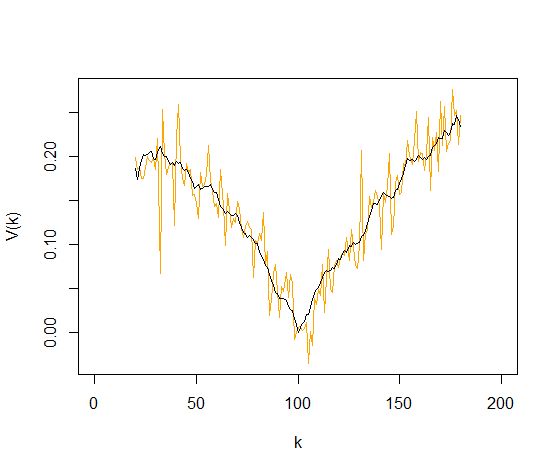}}
	\\
	\subfloat[][$k^*=50$, $\mu_1=1$]{\includegraphics[width=.33\textwidth]{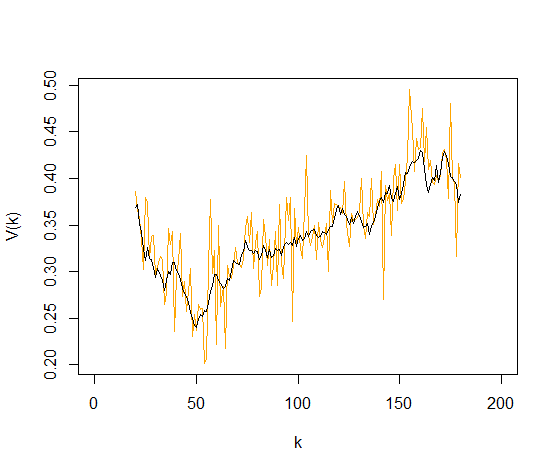}}
	\subfloat[][$k^*=100$, $\mu_1=1$]{\includegraphics[width=.33\textwidth]{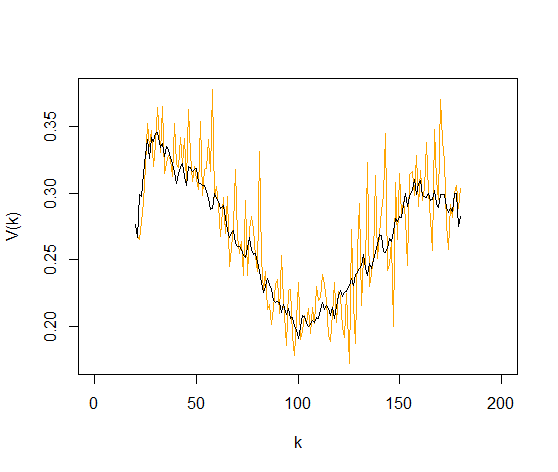}}
	\caption{\small Value for statistics $V(k)$ with (orange) and without (black) Laplace noise with privacy parameter $\epsilon=5$ for varying settings for the size change and location of a change point.
	} \label{fig:statistic}
\end{figure}
\end{minipage}
\end{center}

Finally, we provide simulations of our \PNCPD~algorithm for our application of drift change detection, as described in Section \ref{s.drift}. Recall that our drift change detection model involved data points $X=\{x_1, \ldots, x_n\}$ defined as $x_t=\mu_t + e_t$ where 
\begin{equation*}
\mu_t = \begin{cases}
\eta - (t^*-t)\xi_0 & t\le t^* \\
\eta + (t-t^*)\xi_1 & t> t^*
\end{cases}, 
\end{equation*}
for drift change-point $t^*$, and $e_t$ are mean-zero noise terms. In our simulation we use parameters $\eta=1$, $\xi_0=0$, $\xi_1=5$, and $e_t \sim_{i.i.d.} \mathcal{N}(0,1)$. We use $n=200$ observations where the true drift change occurs at time $t^{*} =100$, and repeat the process $10^3$ times. We modify the observations $X$ to create a new sample $Y=\{y_1,\ldots,y_{n/2}\}$ as described in Section \ref{s.drift}, and apply our \PNCPD~algorithm to this new sample.  Figure \ref{fig:smooth} plots the empirical accuracy $\beta = \Pr[|\tilde{t}-k^\ast|>\alpha]$ as a function of $\alpha$ for $\gamma=0.1$ and $\eps=0.1,1,5,\infty$, where $\eps=\infty$ is our non-private baseline.


\begin{center}
\begin{minipage}{.9\linewidth}
\begin{figure}[H]
	\centering
	\includegraphics[width=.33\textwidth]{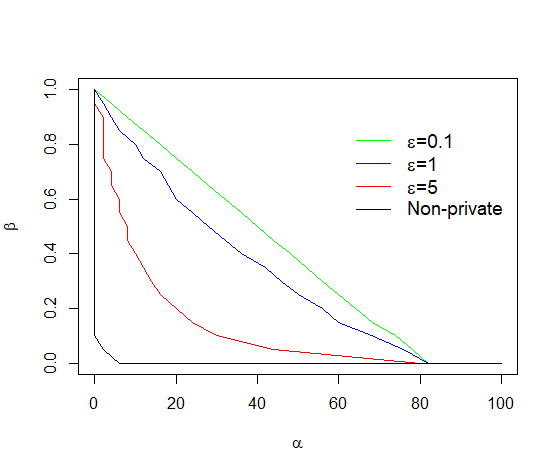}
	\caption{\small Empirical accuracy $\beta = \Pr[|\tilde{t}-t^*|>\alpha]$ of \PNCPD~for drift detection. The data are generated from the drift change model with parameters $\eta=1$, $\xi_0=0$, $\xi_1=5$, and $e_t$ drawn from $\mathcal{N}(0,1)$. These data are then modified as described in Section \ref{s.drift} so that the \PNCPD~algorithm can be applied.
	} \label{fig:smooth}
\end{figure}
\end{minipage}
\end{center}

\subsection{Online Results with Synthetic Data}
We also perform simulations for our online private change-point detection algorithm \OnlinePNCPD\, when the data points arrive sequentially. We use an initial distribution of $\mathcal N(5,1)$ and post-change distribution of $\mathcal N(0,1)$, where the true change occurs at time $k^{*} = 5000$.  To help ensure that the range of the appropriate threshold $T$ in \OnlinePNCPD\ is non-empty, we choose a larger window size $n=500$, and larger privacy parameter $\epsilon=1,5, 10, \infty$. 

We choose the appropriate threshold $T$ by setting a constraint that an algorithm must have positive and negative false alarm rates both at most $0.1$, which can be ensured by setting $\beta=0.4$. (Recall from the proof of Theorem \ref{online.acc} that our false alarm rates are each $\beta/4$.) 
Since we know $k^*$ and $a$, we can compute the theoretical upper and lower bounds on the threshold exactly for the distributions used in our simulations using the expressions given in the statement of Theorem \ref{online.acc}. The resulting lower bounds are $T_L=1.28,0.80,0.74,0.69$ and the upper bounds are $T_U=0.16,0.74,0.81,0.89$ for $\eps=1,5, 10, \infty$, respectively. Although the theoretical range of $T$ is empty for $\eps=1,5$, our empirical results show that $T=0.8$ is sufficient to control both false alarm rates, as the theoretical bounds are overly conservative. We choose $T=0.8$ for all $\eps=1,5,10,\infty$. In practice when $a$ and $k^*$ are unknown, the analyst should set $a$ to be the smallest interesting magnitude of distributional change, and $k^*$ to be the analyst's estimate of the time of the change, and similarly compute $T_L$ and $T_U$ using these estimates. Larger values of $k^*$ correspond to more conservative estimates and result in smaller windows for the threshold.  We also note the analyst can also choose the lower and upper bounds of $T$ via numerical methods as in \cite{CKM+18}. 

We run our \OnlinePNCPD\ algorithm $10^3$ times with $\gamma=0.1$ and privacy parameters $\eps=1,5, 10, \infty$. Figure \ref{fig:online} summarizes these simulation results. As in the proof of Theorem \ref{online.acc}, we can separate the error into two possible sources within the algorithm: halting on an incorrect window, and producing an incorrect estimate of the change-point, even conditioned on halting on the correct window.  Figure \ref{fig:online}(a) shows the error from both of these sources, and Figure \ref{fig:online}(b) shows the error from only the latter source.  These figures show that our algorithm works well with privacy parameters $\eps=5,10,\infty$. For $\eps=1$, we can control the overall error rate to be less than $0.4$ as desired, but not much lower.  Figure \ref{fig:online}(b) shows that this error mainly comes from the failure to halt on the window that contains the true change-point, because the error decreases dramatically after conditioning on the algorithm halting on a correct window that contains the true change-point.

\begin{center}
	\begin{minipage}{.9\linewidth}
		\begin{figure}[H]
			\centering
			\subfloat[][Error probability from inaccurate estimate \\and false alarm]{\includegraphics[width=.45\textwidth]{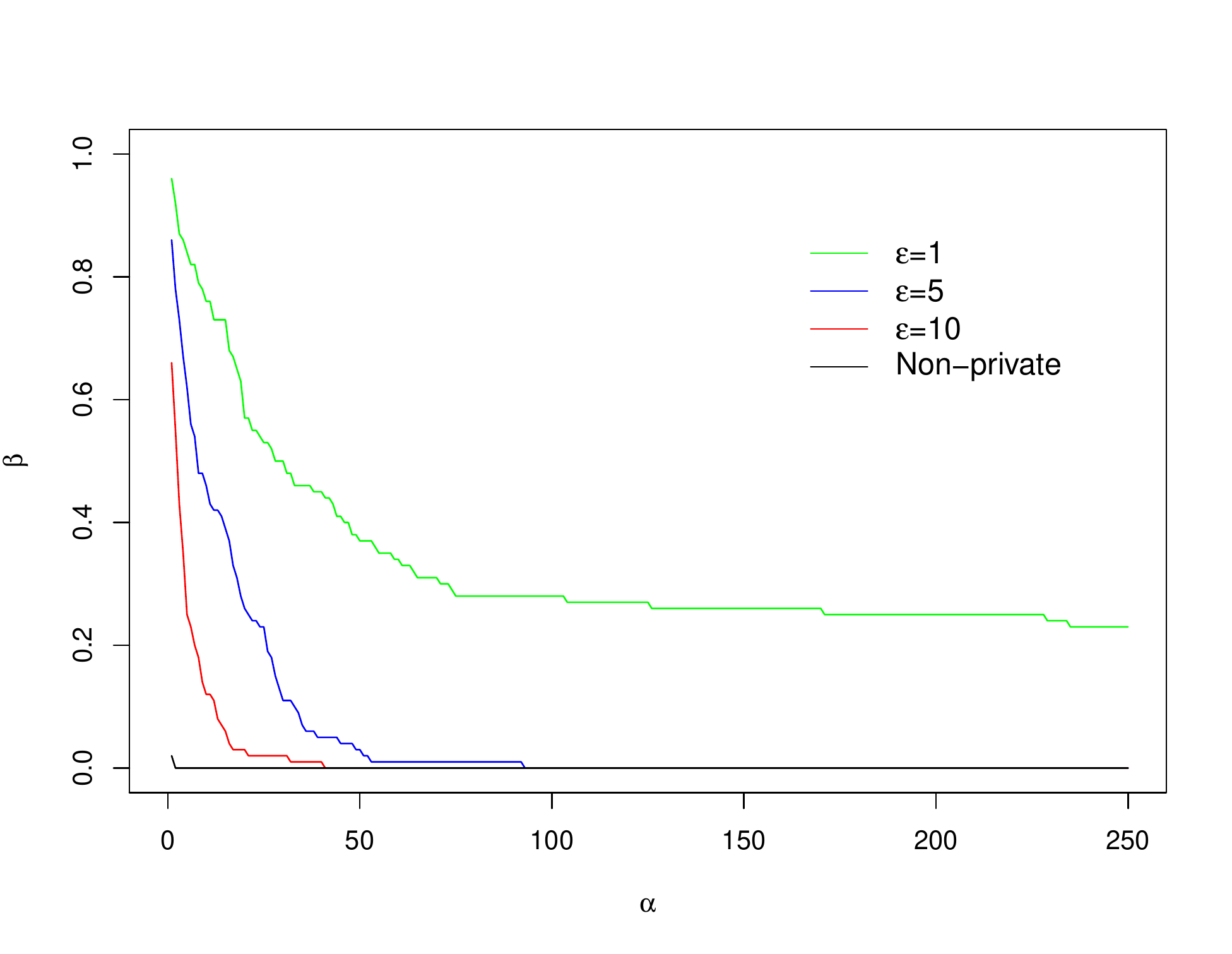}}
			\subfloat[][Error probability from inaccurate estimate only]{\includegraphics[width=.45\textwidth]{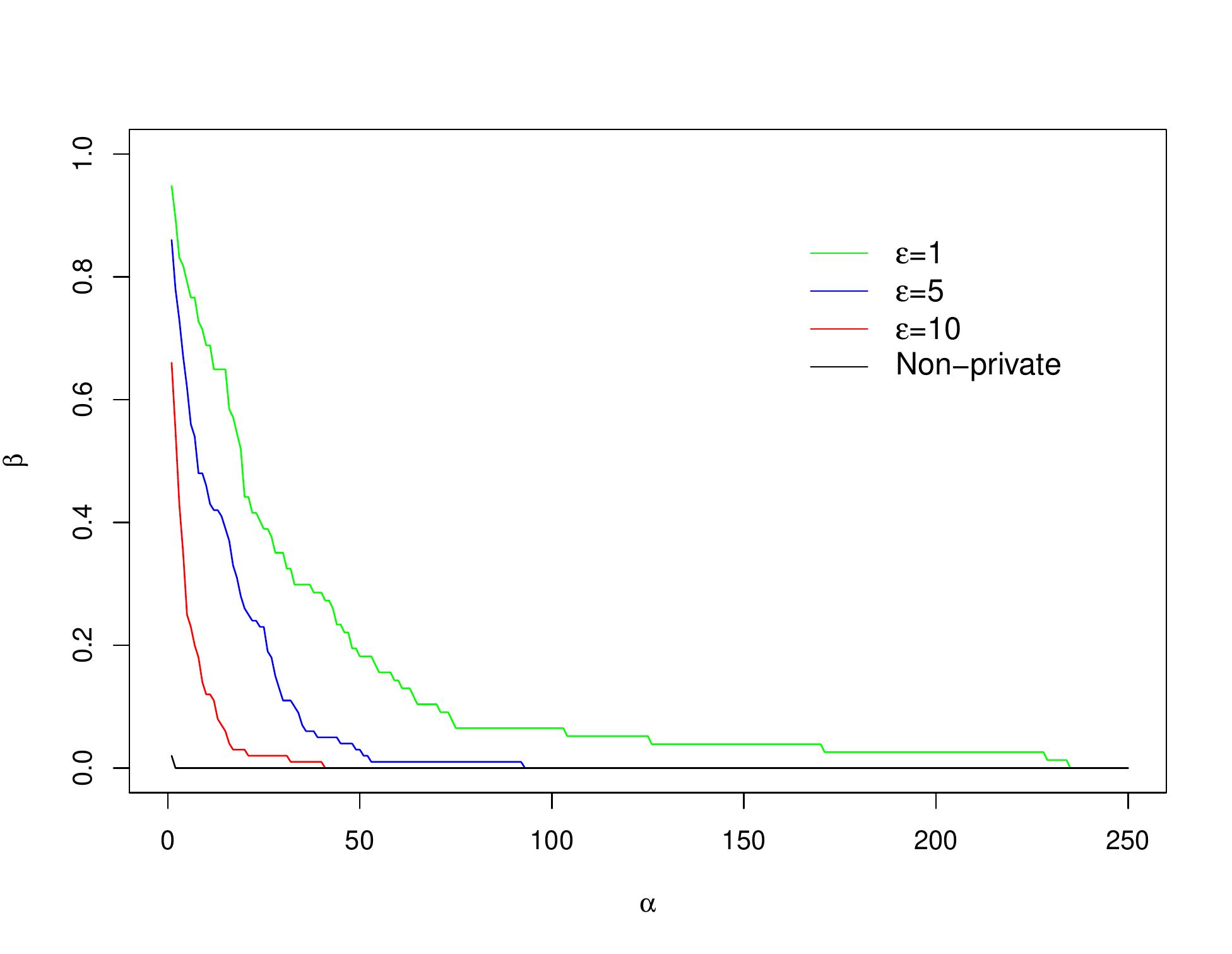}}
		 \caption{\small Probability of inaccurate estimation and false alarm (left) and probability of inaccurate report conditioned on raising an alarm correctly (right) for Monte Carlo simulations. Data drawn from $\mathcal{N}(5,1)$ pre-change and $\mathcal{N}(0,1)$ post-change, with true change-point $k^*=5000$. Each simulation involves $10^3$ runs of \OnlinePNCPD\ with $\gamma=0.1$, window size $n=500$, threshold $T=0.8$, and varying $\eps$. }\label{fig:online}
		\end{figure}
	\end{minipage}
\end{center}




\bibliographystyle{alpha}
\bibliography{ref}

\newcommand{\etalchar}[1]{$^{#1}$}
\begin{thebibliography}{CKM{\etalchar{+}}18b}

\bibitem[ABH89]{azzalini1989use}
Adelchi Azzalini, Adrian~W Bowman, and Wolfgang H{\"a}rdle.
\newblock On the use of nonparametric regression for model checking.
\newblock {\em Biometrika}, 76(1):1--11, 1989.

\bibitem[BJ68]{bhattacharyya1968nonparametric}
GK~Bhattacharyya and Richard~A Johnson.
\newblock Nonparametric tests for shift at an unknown time point.
\newblock {\em The Annals of Mathematical Statistics}, 39(5):1731--1743, 1968.

\bibitem[BP03]{Bai:Perron:2003}
J.~Bai and P.~Perron.
\newblock Computation and analysis of multiple structural change models.
\newblock {\em Journal of Applied Econometrics}, 18(1):1--22, 2003.

\bibitem[Car88]{carlstein1988nonparametric}
Edward Carlstein.
\newblock Nonparametric change-point estimation.
\newblock {\em The Annals of Statistics}, 16(1):188--197, 1988.

\bibitem[Cha17]{chan:2017}
H.~P. Chan.
\newblock Optimal sequential detection in multi-stream data.
\newblock {\em The Annals of Statistics}, 45(6):2736--2763, 2017.

\bibitem[CKM{\etalchar{+}}18a]{canonne2018structure}
Cl{\'e}ment~L Canonne, Gautam Kamath, Audra McMillan, Adam Smith, and Jonathan
  Ullman.
\newblock The structure of optimal private tests for simple hypotheses.
\newblock {\em arXiv preprint arXiv:1811.11148}, 2018.

\bibitem[CKM{\etalchar{+}}18b]{CKM+18}
Rachel Cummings, Sara Krehbiel, Yajun Mei, Rui Tuo, and Wanrong Zhang.
\newblock Differentially private change-point detection.
\newblock In {\em Proceedings of the 32nd International Conference on Neural
  Information Processing Systems}, NeurIPS '18, pages 10848--10857, 2018.

\bibitem[CKS{\etalchar{+}}18]{CKS+18}
Simon Couch, Zeki Kazan, Kaiyan Shi, Andrew Bray, and Adam Groce.
\newblock A differentially private wilcoxon signed-rank test.
\newblock arXiv pre-print 1809.01635, 2018.

\bibitem[CKS{\etalchar{+}}19]{CKS+19}
Simon Couch, Zeki Kazan, Kaiyan Shi, Andrew Bray, and Adam Groce.
\newblock Differentially private nonparametric hypothesis testing.
\newblock arXiv pre-print 1903.09364, 2019.

\bibitem[CXG18]{cao2018multi}
Yang Cao, Yao Xie, and Nagi Gebraeel.
\newblock Multi-sensor slope change detection.
\newblock {\em Annals of Operations Research}, 263(1-2):163--189, 2018.

\bibitem[Dar76]{darkhovskh1976nonparametric}
BS~Darkhovskh.
\newblock A nonparametric method for the a posteriori detection of the
  ``disorder'' time of a sequence of independent random variables.
\newblock {\em Theory of Probability \& Its Applications}, 21(1):178--183,
  1976.

\bibitem[DMNS06]{DMNS06}
Cynthia Dwork, Frank McSherry, Kobbi Nissim, and Adam Smith.
\newblock Calibrating noise to sensitivity in private data analysis.
\newblock In {\em Proceedings of the 3rd Conference on Theory of Cryptography},
  TCC '06, pages 265--284, 2006.

\bibitem[DNPR10]{DNPR10}
Cynthia Dwork, Moni Naor, Toniann Pitassi, and Guy~N. Rothblum.
\newblock Differential privacy under continual observation.
\newblock In {\em 42nd ACM Symposium on Theory of Computing}, STOC '10, 2010.

\bibitem[DNR{\etalchar{+}}09]{DNRRV09}
Cynthia Dwork, Moni Naor, Omer Reingold, Guy~N. Rothblum, and Salil~P. Vadhan.
\newblock On the complexity of differentially private data release: efficient
  algorithms and hardness results.
\newblock In {\em Proceedings of the 41st ACM Symposium on Theory of
  Computing}, STOC '09, pages 381--390, 2009.

\bibitem[DR14]{dwork2014algorithmic}
Cynthia Dwork and Aaron Roth.
\newblock The algorithmic foundations of differential privacy.
\newblock {\em Foundations and Trends in Theoretical Computer Science},
  9(3--4):211--407, 2014.

\bibitem[GC11]{gibbons2011nonparametric}
Jean~Dickinson Gibbons and Subhabrata Chakraborti.
\newblock {\em Nonparametric statistical inference}.
\newblock Springer, 2011.

\bibitem[Kul01]{kulldorff:2001}
M.~Kulldorff.
\newblock Prospective time periodic geographical disease surveillance using a
  scan statistic.
\newblock {\em Journal of the Royal Statistical Society, Series A},
  164(1):61--72, 2001.

\bibitem[Lai95]{lai:1995}
T.~L. Lai.
\newblock Sequential changepoint detection in quality control and dynamical
  systems.
\newblock {\em Journal of the Royal Statistical Society, Series B},
  57(4):613--658, 1995.

\bibitem[Lai01]{lai:2001}
T.~L. Lai.
\newblock Sequential analysis: {s}ome classical problems and new challenges.
\newblock {\em Statistica Sinica}, 11(2):303--408, 2001.

\bibitem[Lil67]{lilliefors1967kolmogorov}
Hubert~W Lilliefors.
\newblock On the kolmogorov-smirnov test for normality with mean and variance
  unknown.
\newblock {\em Journal of the American statistical Association},
  62(318):399--402, 1967.

\bibitem[Lor71]{lorden:1971}
G.~Lorden.
\newblock Procedures for reacting to a change in distribution.
\newblock {\em The Annals of Mathematical Statistics}, 42(6):1897--1908, 1971.

\bibitem[LR02]{Lund:2002}
R.~Lund and J.~Reeves.
\newblock Detection of undocumented changepoints: A revision of the two-phase
  regression model.
\newblock {\em Journal of Climate}, 15(17):2547--2554, 2002.

\bibitem[McD89]{McD89}
Colin McDiarmid.
\newblock On the method of bounded differences.
\newblock In {\em Surveys in Combinatorics}, pages 148--188. Cambridger
  University Press, 1989.

\bibitem[Mei06]{mei:2006a}
Y.~Mei.
\newblock Sequential change-point detection when unknown parameters are present
  in the pre-change distribution.
\newblock {\em The Annals of Statistics}, 34(1):92--122, 2006.

\bibitem[Mei08]{mei:2008a}
Y.~Mei.
\newblock Is average run length to false alarm always an informative criterion?
\newblock {\em Sequential Analysis}, 27(4):354--419, 2008.

\bibitem[Mei10]{mei:2010}
Y.~Mei.
\newblock Efficient scalable schemes for monitoring a large number of data
  streams.
\newblock {\em Biometrika}, 97(2):419--433, 2010.

\bibitem[Mou86]{moustakides:1986}
G.~V. Moustakides.
\newblock Optimal stopping times for detecting changes in distributions.
\newblock {\em The Annals of Statistics}, 14(4):1379--1387, 1986.

\bibitem[MW47]{mann1947test}
Henry~B Mann and Donald~R Whitney.
\newblock On a test of whether one of two random variables is stochastically
  larger than the other.
\newblock {\em The annals of mathematical statistics}, pages 50--60, 1947.

\bibitem[Pag54]{page:1954}
E.~S. Page.
\newblock Continuous inspection schemes.
\newblock {\em Biometrika}, 41(1/2):100--115, 1954.

\bibitem[Par62]{parzen1962estimation}
Emanuel Parzen.
\newblock On estimation of a probability density function and mode.
\newblock {\em The annals of mathematical statistics}, 33(3):1065--1076, 1962.

\bibitem[Pol85]{pollak:1985}
M.~Pollak.
\newblock Optimal detection of a change in distribution.
\newblock {\em The Annals of Statistics}, 13(1):206--227, 1985.

\bibitem[Pol87]{pollak:1987}
M.~Pollak.
\newblock Average run lengths of an optimal method of detecting a change in
  distribution.
\newblock {\em The Annals of Statistics}, 15(2):749--779, 1987.

\bibitem[Rob66]{roberts:1966}
S.~W. Roberts.
\newblock A comparison of some control chart procedures.
\newblock {\em Technometrics}, 8(3):411--430, 1966.

\bibitem[Ros56]{rosenblatt1956remarks}
Murray Rosenblatt.
\newblock Remarks on some nonparametric estimates of a density function.
\newblock {\em The Annals of Mathematical Statistics}, pages 832--837, 1956.

\bibitem[She31]{shewhart:1931}
W.~A. Shewhart.
\newblock {\em Economic Control of Quality of Manufactured Product}.
\newblock D. Van Norstrand Company, Inc., 1931.

\bibitem[Shi63]{shiryaev:1963}
A.~N. Shiryaev.
\newblock On optimum methods in quickest detection problems.
\newblock {\em Theory of Probability \& Its Applications}, 8(1):22--46, 1963.

\bibitem[Wil45]{wilcoxon1945individual}
Frank Wilcoxon.
\newblock Individual comparisons by ranking methods.
\newblock {\em Biometrics bulletin}, 1(6):80--83, 1945.

\bibitem[WW40]{wald1940test}
Abraham Wald and Jacob Wolfowitz.
\newblock On a test whether two samples are from the same population.
\newblock {\em The Annals of Mathematical Statistics}, 11(2):147--162, 1940.

\bibitem[ZS12]{Zhang:Siegmund:2012}
N.~Zhang and D.~O. Siegmund.
\newblock Model selection for high-dimensional, multi-sequence change-point
  problems.
\newblock {\em Statistica Sinica}, 22(4):1507--1538, 2012.

\end{thebibliography}


\end{document}